\title[Topologies on ind-varieties and irreducibility questions]
{On the topologies on ind-varieties and related irreducibility questions}
\author{Immanuel Stampfli}
\address{Mathematisches Institut,
Universit\"at Basel, Rheinsprung 21, CH-4051 Basel}
\email{Immanuel.Stampfli@unibas.ch}
\thanks{The author is supported by the
Swiss National Science Foundation (Schweizerischer National\-fonds),
grant ``Automorphisms of Affine $n$-Space" 137679}
\theoremstyle{plain}
\newtheorem{thm}{Theorem}
\newtheorem{prop}[thm]{Proposition}
\newtheorem{lem}[thm]{Lemma}
\newtheorem{cor}[thm]{Corollary}
\newtheorem{mainthm}{Theorem}
\newtheorem{mainprop}[mainthm]{Proposition}
\newtheorem{maincor}[mainthm]{Corollary}
\theoremstyle{definition}
\newtheorem{defn}{Definition}
\newtheorem{exa}{Example}
\theoremstyle{remark}
\newtheorem{rem}{Remark}
\newcommand{\name}[1]{\textsc{#1\/}}
\renewcommand{\AA}{{\mathbb A}}
\newcommand{\G}{{\mathcal G}}
\newcommand{\E}{{\mathcal E}}
\renewcommand{\aa}{\mathfrak a}
\newcommand{\mm}{\mathfrak m}
\newcommand{\pp}{\mathfrak p}
\DeclareMathOperator{\GL}{GL}
\DeclareMathOperator{\SL}{SL}
\DeclareMathOperator{\Jac}{Jac}
\newcommand{\OO}{\mathcal O}
\newcommand{\Spm}{\mathfrak{Spm}}
\DeclareMathOperator{\Spec}{Spec}
\DeclareMathOperator{\defneq}{\mathrel{\mathop:}=}
\DeclareMathOperator{\epi}{\twoheadrightarrow}
\DeclareMathOperator{\depth}{depth}
\DeclareMathOperator{\charac}{char}
\newcommand{\lab}[1]{\label{#1}}
\begin{document}

\begin{abstract}
	In the literature there are two ways of endowing an affine ind-variety
	with a topology. One possibility is due to \name{Shafarevich}
	and the other to \name{Kambayashi}. In this paper we
	specify a large class of affine ind-varieties where these two topologies
	differ. We give an example of an affine ind-variety that is reducible with
	respect to \name{Shafarevich}'s topology, but irreducible with
	respect to \name{Kambayashi}'s topology.
	Moreover, we give a counter-example of a supposed irreducibility
	criterion given in \cite{Sh1981On-some-infinite-d} which is different from 
	the counter-example given by \name{Homma} in 
	\cite{Ka1996Pro-affine-algebra}. 
	We finish the paper with an irreducibility criterion similar to the one given by
	\name{Shafarevich}.
\end{abstract}

\maketitle


\setcounter{subsection}{-1}

\subsection{Introduction}
For the sake of simplicity, we assume in this introduction,
that the ground field is algebraically closed, uncountable and
of characteristic zero.

In the 1960s, in \cite{Sh1966On-some-infinite-d}, \name{Shafarevich} introduced 
the notion of an infinite-dimensional variety and infinite-dimensional group.
In this paper, we call them ind-variety and ind-group, respectively. 
His motivation was to explore some naturally occurring groups that allow 
a natural structure of an infinite-dimensional analogue to an algebraic group
(such as the group of polynomial automorphisms of the affine space).
More precisely, he defined an ind-variety as the successive limit
of closed embeddings
\[ 
	X_1 \hookrightarrow X_2 \hookrightarrow X_3 \hookrightarrow \ldots
\] 
of ordinary algebraic varieties $X_n$ and an ind-group as a group 
that carries the structure of an ind-variety
compatible with the group structure. We denote the limit
of $X_1 \hookrightarrow X_2 \hookrightarrow \ldots$ by $\varinjlim X_n$
and call $X_1 \hookrightarrow X_2 \hookrightarrow \ldots$ a \emph{filtration}.
If all $X_n$ are affine, then $\varinjlim X_n$ is called \emph{affine}.
For example, one can define a filtration on the group of polynomial automorphisms 
of the affine space via the degree of an automorphism. Further examples of
ind-groups are $\GL_n(k[t])$, $\SL_n(k[t])$, etc.,  where the filtrations
are given via the degrees of the polynomial entries of the matrices
(for properties of these filtrations
in case $n =2$ see \cite{Sh2004On-the-group-rm-GL}).
Fifteen years after his first paper \cite{Sh1966On-some-infinite-d},
\name{Shafarevich} wrote another paper with the same title \cite{Sh1981On-some-infinite-d}, where he gave 
more detailed explanations of some statements of his first paper. 
Moreover, he endowed an ind-variety $\varinjlim X_n$ with the 
weak topology induced by the topological spaces 
$X_1 \subseteq X_2 \subseteq \ldots \,$.
Later \name{Kambayashi} defined (affine) ind-varieties in 
\cite{Ka1996Pro-affine-algebra} and \cite{Ka2003Some-basic-results} 
via a different approach. Namely, he defined an affine ind-variety as a certain 
spectrum of a so-called pro-affine algebra (see Section~\ref{DefandNot.sec} for the definition).
This pro-affine algebra is then the ring of regular functions on the
affine ind-variety. With this approach 
\name{Kambayashi} introduced a topology in a natural way on an affine ind-variety. 
Namely, a subset is closed if it is the zero-set of some regular functions on the
affine ind-variety. In analogy
to the Zariski topology defined on an ordinary affine variety, we call this topology
again \emph{Zariski topology}. In this paper, we call the weak topology on an 
affine ind-variety \emph{ind-topology} to prevent confusion, as the
weak topology is finer than the Zariski topology. The Zariski topology and
the ind-topology differ in general. 
%
%
%
For example, it follows from Exercise 4.1.E, IV. in \cite{Ku2002Kac-Moody-groups-t}
that these topologies differ on the infinite-dimensional affine space 
$\AA^\infty = \varinjlim \AA^n$ (see Example~\ref{top.exa}). 
The aim of this paper is to specify classes of affine ind-varieties where these
topologies differ or coincide, and to study questions
concerning the irreducibility of an affine ind-variety 
(with respect to these topologies).

This paper is organized as follows. We give some basic definitions and notation
in Section \ref{DefandNot.sec}. In the next section we describe a large class of 
ind-varieties where the two topologies differ. The main result of this paper is 
the following

\begin{mainthm}
	\lab{A.thm}
	Let $X = \varinjlim X_n$ be an affine ind-variety. If there
	exists $x \in X$ such that $X_n$ is normal or Cohen-Macaulay at $x$ 
	for infinitely many $n$, and the local dimension of $X_n$ at $x$
	tends to infinity, then the ind-topology and the Zariski topology are different.
\end{mainthm}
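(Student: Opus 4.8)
The plan is to separate the two topologies by exhibiting a subset $Z\subseteq X$ that is closed in the ind-topology but not in the Zariski topology; since the ind-topology is the finer of the two, this is exactly what one needs. First I would pass to the subsequence of indices $n$ along which $X_n$ is normal or Cohen-Macaulay at $x$, and reindex so that every member of the filtration has this property at $x$ and the local dimensions $d_n\defneq\dim_x X_n$ increase strictly to infinity. I would use the two hypotheses only through their common consequence, namely Serre's condition $S_2$ at $x$: once $d_n\ge 2$ one has $\depth_x\OO_{X_n}\ge 2$, so a Hartogs-type extension principle holds — a regular function defined near $x$ off a closed subset of codimension $\ge 2$ extends regularly across that subset. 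This is the single technical engine that treats the normal and the Cohen-Macaulay case uniformly, and it is the reason the theorem can list the two conditions on an equal footing.

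Next I would construct, level by level, a coherent family of positive-dimensional closed subsets $Y_n\subseteq X_n$ with $Y_{n+1}\cap X_n=Y_n$ and $\dim Y_n\to\infty$, built through $x$ so as to exploit the local structure there, and set $Z\defneq\bigcup_n Y_n$. Coherence makes $Z\cap X_m=Y_m$ closed in $X_m$ for every $m$, so $Z$ is automatically ind-closed by the definition of the weak topology; this part is formal. The growth $\dim Y_n\to\infty$, which is where the hypothesis $d_n\to\infty$ is spent, is essential: a discrete sequence of points escaping to infinity would \emph{not} work, since such a set can always be cut out by a single regular function obtained from a finite interpolation at each level, hence is Zariski-closed. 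Only a family whose dimension grows can defeat the finite-level separation.

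To see that $Z$ fails to be Zariski-closed I would show that its Zariski closure acquires an extra point $p\notin Z$, equivalently that no global regular function separates $p$ from $Z$. Consider the inverse system of separating data
\[
	S_n\defneq\{\,f_n\in\OO(X_n)\;:\;f_n|_{Y_n}=0,\ f_n(p)=1\,\},
\]
with transition maps induced by the restrictions $\OO(X_{n+1})\to\OO(X_n)$. A global separating function is precisely an element of $\varprojlim_n S_n$, so it suffices to prove that this inverse limit is empty. Each $S_n$ is nonempty, because at a single finite level a point off the closed set $Y_n$ can always be separated from it; the difficulty is entirely one of coherence across infinitely many levels. I would show that the images of the transition maps form strictly shrinking cosets with empty intersection, so that every $f_n\in S_n$ fails to extend to some higher $S_N$. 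The $S_2$/Hartogs input is what forces this: an extension of $f_n$ that must vanish on the growing set $Y_N$ is, near $x$, constrained off a codimension-$\ge 2$ locus and therefore extends across $x$ with a value there that is incompatible with the condition $f_N(p)=1$ once $N$ is large.

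The main obstacle is precisely this last step — engineering the subsets $Y_n$ so that the extension problem at high level genuinely collides with the $S_2$-extension theorem, and quantifying how the room for a separating function collapses as $\dim Y_n$ grows. Everything preceding it (the subsequence reduction, the coherence that yields ind-closedness, and the reformulation of non-separability as the emptiness of $\varprojlim_n S_n$) is essentially bookkeeping. The real content is the local analysis at $x$ that converts growing dimension together with $\depth_x\ge 2$ into a failure of the Mittag–Leffler condition for the system $(S_n)$, so that separation is possible at every finite stage yet impossible in the limit.
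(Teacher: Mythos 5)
Your high-level skeleton --- reindex along the good subsequence, build an ind-closed set $Z=\bigcup_n Y_n$ with $Y_{n+1}\cap X_n=Y_n$, and show that no $\varphi\in\OO(X)$ separates a point $p$ from $Z$ --- is exactly the frame of the paper's proof (Proposition~\ref{main.prop}). But the proposal stops where the actual proof begins, and the one mechanism you do offer for exploiting the hypothesis is not the right one. The paper's engine is a \emph{squaring} construction: using a Noether-normalization lemma (Lemma~\ref{finite.lem}) one finds $x_n\in\OO(X_n)$ vanishing on $X_{n-1}$, and sets $f_1\defneq c_1x_1$, $f_{n+1}\defneq f_n^2+c_{n+1}x_{n+1}$, so that $f_n(x)=0$ and $f_{n+1}|_{X_n}=f_n^2$; the ind-closed set is $Y\defneq\bigcup_n V_{X_n}(f_n)$, a union of \emph{hypersurfaces}, not of codimension-$\ge 2$ sets. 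The whole point is that the coefficients $c_i$ can be chosen (uncountability of $k$ is needed here) so that $\OO_{X_n,x}/f_n\OO_{X_n,x}$ is \emph{reduced}: then any $\varphi=(\varphi_n)\in\OO(X)$ vanishing on $Y$ satisfies $\varphi_n\in f_n\OO_{X_n,x}$, the relation $f_{n+i}|_{X_n}=f_n^{2^i}$ bootstraps this to $\varphi_n\in f_n^{2^i}\OO_{X_n,x}$ for every $i\ge 0$, and Krull's Intersection Theorem forces $\varphi_n=0$ in $\OO_{X_n,x}$. Hence $\varphi$ vanishes on every irreducible component through $x$, while $f_n$ does not, so the Zariski closure of $Y$ is strictly larger than $Y$. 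Nothing in your proposal produces this ``vanishing to infinite order'' effect; you concede as much when you say the main obstacle is ``engineering the subsets $Y_n$'' --- but that engineering \emph{is} the theorem.

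Moreover, your intended use of $(S_2)$ --- a Hartogs-type extension across a codimension-$\ge 2$ locus --- cannot be the mechanism. The candidate separating functions $f_N\in S_N$ are already globally defined on $X_N$, so there is nothing to extend; and even if one could force a value at $x$, that value would not conflict with the normalization $f_N(p)=1$ at a different point $p$. In the paper, $(S_2)$ enters through Serre's reducedness criterion $(R_0)+(S_1)$ (Lemma~\ref{red.crit.lem}): since $f_n$ is a regular element of $\OO_{X_n,x}$ (this uses the reduction to the components through $x$ and their equidimensionality), the $(S_2)$ hypothesis gives $(S_1)$ for the quotient $\OO_{X_n,x}/f_n\OO_{X_n,x}$, while a generic-fiber argument (Lemma~\ref{gen.red.fibers.lem}, applied to the family $Z_n\to\AA^n$ over the coefficient space of $(c_1,\dots,c_n)$) supplies $(R_0)$; together these yield the reducedness that converts set-theoretic vanishing on $V_{X_n}(f_n)$ into divisibility by $f_n$. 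Without that reducedness, and without the squaring relation and Krull intersection, the inverse system $(S_n)$ you write down has no reason to have empty limit; the failure of the Mittag--Leffler condition is a reformulation of what must be proved, not a proof. As it stands, the proposal is a plan whose decisive step is both missing and aimed at the wrong tool.
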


This theorem follows from a more general statement given in 
Proposition~\ref{main.prop} (see also Remark~\ref{main.rem}). 
As a corollary to this theorem we get 

\begin{maincor}
	\lab{B.cor} Let $X = \varinjlim X_n$ be an affine ind-variety such that
	$X_n$ is normal for infinitely many $n$. Then the ind-topology and
	the Zariski topology coincide if and only if for all $x \in X$ the 
	local dimension of $X_n$ at $x$ is bounded for all $n$.
\end{maincor}

This corollary follows from a more general statement given in 
Corollary~\ref{main.cor}. As a contrast to Theorem~\ref{A.thm}, we show in  Proposition~\ref{contrast.prop}
that the two topologies coincide if $X = \varinjlim X_n$ is ``locally constant'' 
with respect to the Zariski topology. More precisely we prove

\begin{mainprop}
	\lab{C.prop}
	If $X = \varinjlim X_n$ is an affine ind-variety such that every point has
	a Zariski open neighbourhood $U$ with
	$U \cap X_{n} = U \cap X_{n+1}$ for all sufficiently large $n$, 
	then the ind-topology and the Zariski topology coincide.
\end{mainprop}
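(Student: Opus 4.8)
The plan is to show that the ind-topology $\tau_{\mathrm{ind}}$ is contained in the Zariski topology $\tau_{\mathrm{Zar}}$; the reverse inclusion holds automatically, since the ind-topology is finer. I would argue locally, exploiting the Zariski-open cover furnished by the hypothesis. First I reduce to a local statement: if $W$ is ind-open and $\{U_\alpha\}$ is a Zariski-open cover of $X$, then $W$ is Zariski-open as soon as each $W \cap U_\alpha$ is Zariski-open in $X$, because $W = \bigcup_\alpha (W \cap U_\alpha)$. Since each $U_\alpha$ is itself Zariski-open, $W \cap U_\alpha$ is Zariski-open in $X$ precisely when it is open in the subspace Zariski topology of $U_\alpha$. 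Passing to complements, it therefore suffices to prove that on each member $U$ of a suitable Zariski-open cover the subspace ind-topology is contained in the subspace Zariski-topology.

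For the cover I take the neighbourhoods $U$ provided by the hypothesis. The key observation is that such a $U$ is contained in a single $X_N$: if $U \cap X_n = U \cap X_{n+1}$ for all $n \geq N$, then $U \cap X_n = U \cap X_N$ for every $n \geq N$, and since $X_m \subseteq X_N$ for $m < N$ we get $U = \bigcup_n (U \cap X_n) = U \cap X_N \subseteq X_N$. Thus on $U$ the entire ind-structure collapses onto the finite-dimensional variety $X_N$, and both topologies should reduce to the Zariski topology of $X_N$ restricted to $U$.

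Next I would record the lifting lemma that every regular function on $X_N$ extends to $X$. Because the closed embeddings $X_n \hookrightarrow X_{n+1}$ induce surjections $\OO(X_{n+1}) \epi \OO(X_n)$, the inverse system defining $\OO(X) = \varprojlim \OO(X_n)$ has surjective transition maps, so the projection $\OO(X) \epi \OO(X_N)$ is onto. Consequently every Zariski-closed subset $C$ of $X_N$ has the form $Z \cap X_N$ for some Zariski-closed $Z \subseteq X$: one lifts generators of the defining ideal of $C$ to elements of $\OO(X)$ and lets $Z$ be their common zero set, so that $Z \cap X_N = C$.

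With these tools the local comparison is immediate. Let $A = Z \cap U$ be closed in the subspace ind-topology on $U$, with $Z \subseteq X$ ind-closed. Since $U \subseteq X_N$, I have $A = (Z \cap X_N) \cap U$, and $Z \cap X_N$ is closed in $X_N$ by the definition of ind-closedness. Lifting $Z \cap X_N$ to a Zariski-closed set $\tilde Z \subseteq X$ via the previous step yields $A = \tilde Z \cap U$, which is closed in the subspace Zariski topology of $U$. This establishes the required inclusion on each $U$, and the local-to-global reduction then gives $\tau_{\mathrm{ind}} = \tau_{\mathrm{Zar}}$. I expect the lifting lemma, namely the surjectivity of $\OO(X) \epi \OO(X_N)$, to be the main obstacle, since it is what allows one to pass from closed subsets of the finite-dimensional variety $X_N$ back to genuine regular functions on the ambient ind-variety $X$.
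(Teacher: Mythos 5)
Your proof is correct and follows essentially the same route as the paper: the paper's own argument is exactly this localization to the stabilizing Zariski-open neighbourhoods $U_x$, showing the trace of an ind-closed set on each $U_x$ is relatively Zariski-closed and concluding by the locality of closedness. What you add is merely the detail hidden behind the paper's ``one can see,'' namely that $U_x \subseteq X_N$ and that the surjectivity of $\OO(X) \epi \OO(X_N)$ (from the surjective transition maps of the inverse system) lets one realize closed subsets of $X_N$ as traces of Zariski-closed subsets of $X$.
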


Section \ref{IrredCoord.sec} contains an example of an affine ind-variety that
is reducible with respect to the ind-topology, but irreducible with 
respect to the Zariski topology. This is the content of Example~\ref{irred1.exa}.

In the last section we give a counter-example to 
Proposition~1 in \cite{Sh1981On-some-infinite-d} (see Example \ref{irred2.exa}).
The content of the proposition is: an
ind-variety $X = \varinjlim X_n$ is irreducible with respect to the ind-topology
if and only if the set of irreducible components of all $X_n$ is directed 
under inclusion. One can see that the latter condition is equivalent to the 
existence of a filtration
$X'_1 \hookrightarrow X'_2 \hookrightarrow \ldots$ where each 
$X'_n$ is irreducible and $\varinjlim X'_n = X$.
In \cite{Ka1996Pro-affine-algebra}, \name{Homma} gave
a counter-example to that supposed irreducibility criterion. But in contrast to his 
counter-example, the number of irreducible components of $X_n$
in our counter-example is bounded for all $n$.
We finish the paper with the following irreducibility criterion. 
The proposition follows from Proposition \ref{irred.crit.prop}.

\begin{mainprop}
	\lab{D.prop}
	Let $X = \varinjlim X_n$ be an affine ind-variety 
	where the number of irreducible components of $X_n$ is bounded for all $n$.
	Then $X$ is irreducible with respect to the ind-topology (Zariski topology) 
	if and only if there exists a chain of irreducible subvarieties 
	$X'_1 \subseteq X'_2 \subseteq \ldots$ in $X$ 
	(i.e., $X'_n$ is an irreducible subvariety of some $X_m$) such that 
	$\bigcup_n X'_n$ is dense in $X$ with respect to the ind-topology 
	(Zariski topology).
\end{mainprop}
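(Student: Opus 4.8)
The plan is to prove both implications, treating the two topologies in parallel. Throughout, I would write $X_n = \bigcup_{i=1}^{r_n} Z_{n,i}$ for the decomposition of $X_n$ into its irreducible components, and let $c$ be a uniform bound with $r_n \le c$ for all $n$.

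For the implication that a dense chain forces irreducibility, I would first observe that each $X'_n$, being an irreducible algebraic variety, remains irreducible as a subspace of $X$ in either topology: in the ind-topology the subspace topology on a member of the filtration is its variety topology, while in the Zariski topology it is coarser, and irreducibility is preserved under coarsening. Since the chain is increasing, $\bigcup_n X'_n$ is irreducible, for if it were covered by two proper relatively closed sets $A,B$, then choosing points $a,b \in \bigcup_n X'_n$ with $a \notin B$ and $b \notin A$ and an $N$ with $a,b \in X'_N$ would exhibit $X'_N$ as reducible. As $\bigcup_n X'_n$ is dense, its closure $X$ is irreducible, since the closure of an irreducible set is irreducible. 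This half does not use the boundedness hypothesis.

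For the converse I would assume $X$ irreducible. Because each $Z_{n,i}$ is closed and irreducible in $X_n$ and $X_n$ is closed in $X_{n+1}$, it is contained in at least one component of $X_{n+1}$; I would fix such a choice, giving a parent map $p_n \colon \{Z_{n,i}\}_i \to \{Z_{n+1,j}\}_j$. Following parents from any component produces a \emph{thread} $Z_{n,i_n} \subseteq Z_{n+1,i_{n+1}} \subseteq \cdots$ whose union $U = \bigcup_n Z_{n,i_n}$ is an increasing union of irreducible subvarieties. The key point, and the only place the bound $c$ enters, is that there are at most $c$ threads up to eventual coincidence: since each $p_n$ is a function, two threads agreeing at one level agree at all later levels, so distinct threads occupy distinct components at every sufficiently high level, and any $c+1$ of them would force $c+1$ distinct components at a common level, contradicting $r_n \le c$.

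Finally I would note that every point of $X$ lies in some $Z_{m,j}$, and the thread started at $Z_{m,j}$ by following parents has union containing that point; hence $X$ is the union of finitely many (at most $c$) thread-unions $U_1,\dots,U_s$. Taking closures in the chosen topology gives $X = \overline{U_1}\cup\cdots\cup\overline{U_s}$ as a finite union of closed sets, and irreducibility of $X$ forces $X = \overline{U_k}$ for some $k$; that $U_k$ is then the desired dense chain of irreducible subvarieties. The main obstacle is exactly this finiteness-of-threads step: without a bound on the number of components one cannot reduce to a finite union, and the irreducibility of $X$ gives no leverage — which is precisely the phenomenon behind the counter-example in Example~\ref{irred2.exa}.
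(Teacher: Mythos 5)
Your proof is correct, and at its core it follows the same strategy as the paper's proof of Proposition~\ref{irred.crit.prop}: write $X$ as a finite union of closures of increasing chains of irreducible subvarieties, then let irreducibility of $X$ force one of these closures to be all of $X$ (and, as you note, the converse direction is elementary and needs no bound). The difference lies in how the finitely many chains are produced, and here your version is genuinely more careful. The paper asserts that the components can be labelled so that $X_n = X_n^1 \cup \cdots \cup X_n^l$ with $X_n^i \subseteq X_{n+1}^i$ for all $n$ and $i$, i.e.\ it builds $l$ chains that all start at level $1$ and exhaust every component of every $X_n$. Such a labelling forces every component of $X_{n+1}$ to contain some component of $X_n$, which can fail; it fails already in the paper's own Example~\ref{irred2.exa}, where the unique component of $X_1$ is $Z_1 = \AA^1$ (since $Y_1 = \{1\} \subseteq Z_1$) and the component $Y_2$ of $X_2$ does not contain it (as $g_2$ restricted to the $x_1$-axis is $(x_1-1)(x_1-2) \neq 0$), so any chain of components indexed from level $1$ is trapped in $Z_1 \subseteq Z_2 \subseteq \cdots$, whose union is a proper closed, hence non-dense, subset. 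Your threads, which may start at any level, together with the count that pairwise non-eventually-coincident threads occupy distinct components at every sufficiently high common level (and that eventually coincident threads have equal unions, an increasing union being the union of any of its tails), repair exactly this defect. This is also why your argument proves precisely Proposition~\ref{C.prop}, where $X'_n$ may be a subvariety of some $X_m$ with unrelated indexing, whereas the sharper Proposition~\ref{irred.crit.prop}, demanding a component $F_n$ of $X_n$ for every $n \geq 1$, is as literally stated contradicted by Example~\ref{irred2.exa} and only holds for a truncated filtration $(X_n)_{n \geq m}$ --- which is in effect what your threads provide.
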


\subsection{Definitions and notation}
\lab{DefandNot.sec}
Throughout this paper we work over an uncountable algebraically closed field $k$. 
We use the definitions and notation of \name{Kambayashi} in 
\cite{Ka2003Some-basic-results} and \name{Kumar} in
\cite{Ku2002Kac-Moody-groups-t}. Let us recall them briefly.
A \emph{pro-affine algebra} is a complete and separated commutative 
topological $k$-algebra
such that $0$ admits a countable base of open neighbourhoods consisting of
ideals. Let $A$ be a pro-affine algebra and let 
$\aa_1 \supseteq \aa_2 \supseteq \ldots$ be a base for $0 \in A$ 
as mentioned above. Let $A_n = A / \aa_n$ and let $\Spm(A)$ be the set of 
closed maximal ideals of $A$. Then we have
\[
	A = \varprojlim A_n \quad \textrm{and} \quad
	\Spm(A) = \bigcup_{n = 1}^{\infty} \Spm(A_n)
\]
(cf. 1.1 and 1.2 in \cite{Ka2003Some-basic-results}).

\begin{defn}
	An \emph{affine ind-variety} 
	is a pair $(\Spm(A), A)$ where $A$ is a pro-affine algebra such that 
	$A / \aa_n$ is reduced and finitely generated for some
	countable base of ideals $\aa_1 \supseteq \aa_2 \supseteq \ldots$ of 
	$0 \in A$. We call $A$ the \emph{coordinate ring} of the affine ind-variety
	and the elements of $A$ \emph{regular functions}.
	Two ind-varieties are called \emph{isomorphic} if the underlying
	pro-affine algebras are isomorphic as topological $k$-algebras. Such 
	an isomorphism induces then a bijection of the spectra.
\end{defn}

One can construct affine ind-varieties in the following way. Consider a
\emph{filtration of affine varieties}, i.e., a countable 
sequence of closed embeddings of affine varieties
\[
	X_1 \hookrightarrow X_2 \hookrightarrow X_3 \hookrightarrow \ldots \, .
\]
Let $X = \bigcup_{n = 1}^{\infty} X_n$ as a set and
let $\OO(X) \defneq \varprojlim \OO(X_n)$. We endow $\OO(X)$ with the topology
induced by the product topology of $\prod_n \OO(X_n)$, where $\OO(X_n)$
carries the discrete topology for all $n$. Then $(\Spm(\OO(X)), \OO(X))$ 
is an affine ind-variety and there is a natural bijection 
$X \to \Spm(\OO(X))$ induced by the bijections
$X_n \to \Spm(\OO(X_n))$. In the following, we denote this ind-variety by 
$\varinjlim X_n$. In fact, every affine ind-variety can 
be constructed in this way (up to isomorphy).  Two filtrations 
$X_1 \hookrightarrow X_2 \hookrightarrow \ldots$ and 
$X'_1 \hookrightarrow X'_2 \hookrightarrow \ldots$ induce the same affine 
ind-variety (up to isomorphy) if and only if there exists a bijection 
\[
	f \colon \bigcup_{n = 1}^\infty X_n \to \bigcup_{n = 1}^\infty X'_n
\]
with the following property: for every $i$ there exists $j_i$ and for every
$j$ there exists $i_j$, such that $f |_{X_i} \colon X_i \to X_{j_i}$ and
$f^{-1} |_{X_j} \colon X_j \to X_{i_j}$ are closed embeddings of affine varieties.
Such filtrations are called \emph{equivalent}.



\subsection{Topologies on affine ind-varieties}
\lab{Topo.sec}

So far we have not established any topology on the set $\Spm(A)$
of an affine ind-variety $(\Spm(A), A)$. As mentioned in
the introduction there are two ways to introduce a 
topology on the set $\Spm(A)$. 
The first possibility is due to \name{Shafarevich} 
\cite{Sh1966On-some-infinite-d}, \cite{Sh1981On-some-infinite-d}
and we call it the \emph{ind-topology}. A subset $Y \subseteq \Spm(A)$
is closed in this topology if and only if $A \cap \Spm(A_n)$ is a closed
subset of $\Spm(A_n)$ for all $n$. One can easily check that
this topology does not depend on the choice of the ideals 
$\aa_1 \subseteq \aa_2 \subseteq \ldots \,$.
The second possibility is due to \name{Kambayashi} 
\cite{Ka1996Pro-affine-algebra}, \cite{Ka2003Some-basic-results} and we call 
it the \emph{Zariski topology}. The closed subsets in this topology are the 
subsets of the form
\[
	V_{\Spm(A)}(E) \defneq \{ \, \mm \in \Spm(A) \ | \ \mm \supseteq E \, \} \, ,
\]
where $E$ is any subset of $A$. Clearly, the ind-topology
is finer than the Zariski topology. But in general
these two topologies on $\Spm(A)$ differ. In the next 
proposition (which implies Theorem~\ref{A.thm}) we specify a large class
of affine ind-varieties where the two topologies differ. 

\begin{prop}
	\lab{main.prop}
	We assume that $\charac(k)=0$.
	Let $X = \varinjlim X_n$ be an affine ind-variety. Assume that there exists
	$x \in X$ such that $\OO_{X_n, x}$ satisfies
	\name{Serre}'s condition $(S_2)$ for infinitely many $n$ 
	and $\dim_x X_n \to \infty$ if $n \to \infty$. 
	Then there exists a subset $Y \subseteq X$ such that
	\begin{enumerate} 
	\item[i)] $Y$ is closed in $X$ with respect to the ind-topology,	
	\item[ii)] $Y$ is not closed in $X$ with respect to the Zariski topology.
	\end{enumerate}
	In particular, there exist no isomorphism $X \to X$ of affine ind-varieties that
	is a homeomorphism if we endow the first $X$ with the ind-topology 
	and the second $X$ with the Zariski topology.
\end{prop}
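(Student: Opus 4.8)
The plan is to exploit that the ind-topology is finer than the Zariski topology, so it suffices to exhibit a single set $Y$ that is closed in the ind-topology but not in the Zariski topology; concretely I will produce $Y$ together with the given point $x$ such that $x \in V_{\Spm(A)}(I(Y)) \setminus Y$, where $I(Y) = \{\,f \in A \mid f|_Y = 0\,\}$. Since the value of a regular function $f = (f_n) \in A = \varprojlim \OO(X_n)$ at $x$ is the same at every level, condition (ii) amounts to the statement that no coherent family $(f_n)$ with $f_n \in I(Y \cap X_n)$ has a common nonzero value at $x$, whereas (i) only requires that $Y \cap X_n$ be Zariski closed in $X_n$ for every $n$. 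The whole difficulty is thus to force the inverse-limit ideal $I(Y)$ into the maximal ideal $\mm_x$ while keeping $x \notin Y$.

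First I would normalize the filtration. Using the freedom to pass to an equivalent filtration, I discard all levels at which $(S_2)$ fails at $x$ (infinitely many remain by hypothesis) and then thin out further so that, after relabelling, $\OO_{X_n, x}$ satisfies $(S_2)$ for every $n$ and $\dim_x X_{n+1} \geq \dim_x X_n + 2$; in particular $X_n$ has codimension at least $2$ at $x$ inside $X_{n+1}$. The point of this arrangement is that, since $\OO_{X_{n+1}, x}$ then has depth at least $2$, regular functions enjoy an algebraic Hartogs property across the embedding $X_n \hookrightarrow X_{n+1}$: the behaviour at $x$ of a function on $X_{n+1}$ is already determined off the codimension-$\geq 2$ locus $X_n$.

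Next I would build $Y$ by a nested induction, choosing closed subvarieties $Y_n \subseteq X_n$ of growing dimension with $x \notin Y_n$ and $Y_{n+1} \cap X_n = Y_n$, so that $Y := \bigcup_n Y_n$ satisfies $Y \cap X_n = Y_n$ and is therefore ind-closed, giving (i). The subtlety is to choose each new stratum $Y_{n+1} \setminus Y_n \subseteq X_{n+1} \setminus X_n$ so as to impose enough additional vanishing that, combined with the Hartogs property above, it becomes impossible to carry a nonzero value at $x$ coherently up the tower: any $f_n \in I(Y_n)$ with $f_n(x) \neq 0$ should fail to extend to some $f_{n+1} \in I(Y_{n+1})$, so that the only coherent families in $\varprojlim I(Y_n)$ vanish at $x$, whence $I(Y) \subseteq \mm_x$ with $x \notin Y$, which is exactly (ii). This last coherence argument is where I expect the real work to lie: at every finite level $x$ lies outside the Zariski-closed set $Y_n$, so the effect is a genuine inverse-limit (indeed $\varprojlim^1$-type) phenomenon, and the delicate point is to convert the depth-$\geq 2$ hypothesis into a quantitative obstruction to lifting a function that separates $x$ from $Y_n$ all the way up the filtration; the rest is bookkeeping.

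Finally, the last clause is a formal consequence. Any isomorphism $\varphi \colon X \to X$ of affine ind-varieties is induced by a topological $k$-algebra automorphism of $A$, and both the ind-closed and the Zariski-closed subsets of $\Spm(A)$ are defined intrinsically from $A$; hence $\varphi$ is automatically a homeomorphism of $(X,\mathrm{ind})$ with itself and of $(X,\mathrm{Zar})$ with itself. If in addition $\varphi$ were a homeomorphism from $(X,\mathrm{ind})$ to $(X,\mathrm{Zar})$, then for every subset $S$ one would obtain that $S$ is ind-closed if and only if $\varphi(S)$ is Zariski closed, if and only if $\varphi(S)$ is ind-closed; since $\varphi$ acts bijectively on subsets, this would force the ind-topology and the Zariski topology on $X$ to coincide, contradicting (i) and (ii).
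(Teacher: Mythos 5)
Your proposal sets up a reasonable framework (exhibit a set $Y$ that is ind-closed but not Zariski closed), and your final paragraph correctly disposes of the ``in particular'' clause: an ind-variety automorphism preserves each of the two topologies separately, so it could only be a homeomorphism from the ind-topology to the Zariski topology if the two topologies coincided. But the core of the proposition --- the actual construction of $Y$ and the proof of ii) --- is absent. You describe what the inductive choice of $Y_{n+1}$ should accomplish (``impose enough additional vanishing that it becomes impossible to carry a nonzero value at $x$ coherently up the tower'') and then explicitly defer it (``this is where I expect the real work to lie \ldots the rest is bookkeeping''). That deferred step is the entire mathematical content of the proposition. Moreover, the one tool you name, an algebraic Hartogs property coming from $\depth \OO_{X_{n+1},x} \geq 2$, is not obviously the right one: determination of a function by its values off $X_n$ can fail outright when irreducible components of $X_{n+1}$ lie inside $X_n$ (the paper must pass to the components through $x$ and invoke equidimensionality, via $(S_2)$ and EGA~IV, precisely to control this), and extension of functions across a codimension-two subset does not by itself obstruct a coherent family $(f_n)$ with $f_n(x) \neq 0$. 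As written, the proposal is a plan rather than a proof, and there is no evidence the plan closes.

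For comparison, the paper's mechanism is genuinely different, and in particular it does not arrange $x \notin Y$: there one has $x \in Y$, with $Y = \bigcup_n V_{X_n}(f_n)$ for functions satisfying $f_n(x) = 0$ and the doubling relation $f_{n+1}|_{X_n} = f_n^2$, built from a Noether normalization adapted to the filtration (Lemma~\ref{finite.lem}) as $f_{n+1} = f_n^2 + c_{n+1}x_{n+1}$ with generic constants $c_i$ --- this is where uncountability of $k$ is used, a hypothesis your sketch never touches. The condition $(S_2)$ enters not through Hartogs but to make the local ring $\OO_{X_n,x}/f_n\OO_{X_n,x}$ reduced: $(S_2)$ of $\OO_{X_n,x}$ gives $(S_1)$ of the quotient by the regular element $f_n$, while generic reducedness of the fibers of an auxiliary family supplies $(R_0)$, and Lemma~\ref{red.crit.lem} concludes. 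Reducedness converts ``$\varphi_n$ vanishes on $V_{X_n}(f_n)$'' into ``$\varphi_n \in f_n \OO_{X_n,x}$''; the doubling relation then forces $\varphi_n \in f_n^{2^i}\OO_{X_n,x}$ for all $i$, and \name{Krull}'s Intersection Theorem gives $\varphi_n = 0$ near $x$. Hence every regular function vanishing on $Y$ vanishes on all components of $X_n$ through $x$, so the Zariski closure of $Y$ strictly contains $Y$. Your intuition that the difficulty is an inverse-limit phenomenon is accurate, but the concrete engine that drives the paper's proof (the squared restriction relation, reducedness of the local quotient, and Krull intersection) has no counterpart in your proposal.
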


\begin{rem}
	\lab{main.rem}
	A Noetherian ring $A$ satisfies
	\name{Serre}'s condition $(S_2)$ if
	$\depth A_{\pp}  \geq \min \{ \dim A_{\pp}, 2 \}$
	for all primes $\pp \subseteq A$. For example, this is satisfied if
	$A$ is normal (and hence also if $A$ is a unique factorization domain) 
	or Cohen-Macaulay 
	(and hence also if $A$ is Gorenstein, locally a complete intersection or regular) 
	(see Theorem~23.8 \cite{Ma1986Commutative-ring-t}).
\end{rem}

We will use the following lemmata to prove Proposition~\ref{main.prop}.

\begin{lem}
	\lab{finite.lem}
	Let $Z$ and $Y$ be affine varieties and assume that there exists a 
	closed embedding $Z \hookrightarrow Y$. If $f \colon Z \epi \AA^{\dim Z}$
	is a finite surjective morphism, then there exists a finite 
	surjective morphism $g \colon Y \epi \AA^{\dim Y}$ such that 
	$g |_{Z} = \iota \circ f$, where 
	$\iota: \AA^{\dim Z} \hookrightarrow \AA^{\dim Y}$ is given by 
	$\iota(v) = (v, 0)$.
\end{lem}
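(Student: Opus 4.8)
The plan is to translate the whole statement into commutative algebra and then to reduce it to a form of Noether normalization that is compatible with the closed subvariety $Z$. Write $\OO(Y) \epi \OO(Z)$ for the surjection corresponding to the closed embedding, let $I \subseteq \OO(Y)$ be its kernel, and let $a_1, \dots, a_d \in \OO(Z)$ be the images of the coordinate functions under $f^*$, where $d = \dim Z$. Since $f$ is finite and surjective, $\OO(Z)$ is a finite module over $k[a_1, \dots, a_d]$ and the $a_i$ are algebraically independent, so they form a Noether normalization of $\OO(Z)$. First I would lift the $a_i$ to elements $u_1, \dots, u_d \in \OO(Y)$; these stay algebraically independent, since a relation among them would descend to $\OO(Z)$. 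The map $(u_1, \dots, u_d)\colon Y \to \AA^d$ then restricts to $f$ on $Z$, but it need not be finite, so the task is to adjoin $D - d$ further coordinates (with $D = \dim Y$) that vanish on $Z$ and make the map finite onto $\AA^D$.

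Next I would pass to a convenient generating set. For each algebra generator $w$ of $\OO(Y)$, finiteness of $\OO(Z)$ over $k[a_1, \dots, a_d]$ provides a monic $P \in k[u_1, \dots, u_d][T]$ with $P(w) \in I$; putting $\eta \defneq P(w)$ gives an element of $I$ over which $w$ is integral, because $P(w) - \eta = 0$ is monic in $w$ with coefficients in $k[u_1, \dots, u_d, \eta]$. Hence $\OO(Y)$ is finite over the subalgebra $R$ generated by $u_1, \dots, u_d$ together with finitely many elements $\eta_1, \dots, \eta_p \in I$, and it suffices to normalize $R$ (a normalization of $R$ with the right residues modulo $I$ transports to $\OO(Y)$). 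Note $\dim R = D$, and modulo $I$ the subalgebra $R$ reduces to the polynomial ring $k[a_1, \dots, a_d]$.

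I would then apply Noether normalization over the infinite field $k$: choosing $D$ generic $k$-linear combinations $\theta_1, \dots, \theta_D$ of $u_1, \dots, u_d, \eta_1, \dots, \eta_p$ makes $R$ (hence $\OO(Y)$) finite over $k[\theta_1, \dots, \theta_D]$ with the $\theta_i$ algebraically independent. Reducing modulo $I$ kills the $\eta_j$, so each $\overline{\theta_i}$ is a $k$-linear form in $a_1, \dots, a_d$; by genericity the top $d \times d$ coefficient block $C$ is invertible, so every $\overline{\theta_{d+l}}$ is a $k$-linear combination of $\overline{\theta_1}, \dots, \overline{\theta_d}$. The final step is a linear correction: subtracting that combination of $\theta_1, \dots, \theta_d$ from each $\theta_{d+l}$ yields $\theta'_{d+l} \in I$, while applying $C^{-1}$ to $\theta_1, \dots, \theta_d$ yields $u'_1, \dots, u'_d$ with $\overline{u'_i} = a_i$. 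These are invertible $k$-linear changes of the $\theta_i$, so the subalgebra is unchanged and $\OO(Y)$ stays finite over $k[u'_1, \dots, u'_d, \theta'_{d+1}, \dots, \theta'_D]$. Then $g \defneq (u'_1, \dots, u'_d, \theta'_{d+1}, \dots, \theta'_D)$ is module-finite, surjective by lying-over for the integral extension, and restricts on $Z$ to $(a_1, \dots, a_d, 0, \dots, 0) = \iota \circ f$, as required.

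The subtle point is the normalization step. A naive relative Noether normalization over $k[u_1, \dots, u_d]$ fails: making a relation monic in one variable by a generic linear change only produces a leading coefficient that is a nonzero polynomial in the $u_i$, not a unit, so one obtains integrality over the fraction field rather than over $k[u_1, \dots, u_d]$ itself; in fact keeping the $u_i$ literally fixed can be impossible, as $Y = \{xy = 0\}$ with $Z = \{x = 0\}$ already shows. The device that circumvents this is to perform the \emph{absolute} Noether normalization first and only afterwards apply the invertible $k$-linear correction, taken to be the identity on the last $D - d$ coordinates; this simultaneously pushes the extra coordinates into $I$ and restores the prescribed values $a_i$ on $Z$ without disturbing finiteness. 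It is also worth noting that $Z \neq \varnothing$ (guaranteed by surjectivity of $f$) is exactly what makes such a correction available.
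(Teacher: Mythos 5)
Your proof is correct, and its first half coincides with the paper's own argument: both lift the coordinates of $f$ to elements $u_1,\ldots,u_d \in \OO(Y)$, and both use integrality of $\OO(Z)$ over $k[a_1,\ldots,a_d]$ to trade each remaining generator $w$ of $\OO(Y)$ for a kernel element $\eta = P(w)$ over which $w$ is integral, reducing everything to finding a Noether normalization of $R = k[u_1,\ldots,u_d,\eta_1,\ldots,\eta_p]$ compatible with $I$. The two arguments part ways at the normalization step. The paper runs the normalization induction by hand (following Lemma~2, \S 33 in \name{Matsumura}): whenever the current elements are algebraically dependent, it eliminates the last kernel element $h_l$, replacing each other element by itself minus $c_i h_l$; since these corrections are constant multiples of $h_l \in \ker\psi$, all residues modulo the kernel are preserved exactly at every step, so no repair is ever needed. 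You instead invoke absolute Noether normalization by generic linear combinations as a black box and repair the residues only at the end, by one invertible linear change (inverting the block $C$ and pushing the last $D-d$ elements into $I$). Your route is more modular, but it relies on the ``sufficiently general'' form of Noether normalization over an infinite field: the admissible coefficient matrices must contain a nonempty Zariski-open set, so that the open condition $\det C \neq 0$ can be imposed simultaneously. (Even without that, your correction survives: for any linear normalization, the $u$-part of the coefficient matrix must have rank $d$, since otherwise $\OO(Z)$ would be finite over a subalgebra of dimension $< d$, and a row permutation, which does not change the subalgebra $k[\theta_1,\ldots,\theta_D]$, then makes $C$ invertible.) The paper's inductive bookkeeping avoids any genericity-of-matrices discussion, at the price of reproving the normalization trick inside the induction; and your example $Y = \{xy=0\} \supseteq Z = \{x=0\}$ correctly pinpoints why the lifts cannot be kept fixed --- note that the paper's final elements likewise satisfy only $b''_i - b_i \in \ker\psi$, not $b''_i = b_i$.
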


\begin{proof}
	Let $A \defneq \OO(Z)$, $B \defneq \OO(Y)$
	and let $\psi \colon B \epi A$ be the surjective homomorphism
	induced by $Z \hookrightarrow Y$. Further, let $f_1, \ldots, f_n$ be the 
	coordinate functions of $f$. By assumption
	$k[f_1, \ldots, f_n] \subseteq A$ is an integral extension and
	$f_1, \ldots, f_n$ are algebraically independent.
	Choose generators $b_1, \ldots, b_l$ of the $k$-algebra $B$
	such that $\psi(b_i) = f_i$ for $i = 1, \ldots, n$. For every $j = n+1, \ldots, l$
	there exists a monic polynomial
	$p_j \in k[b_1, \ldots, b_n][T]$ such that 
	$h_j \defneq p_j(b_j) \in \ker(\psi)$, since $k[f_1, \ldots, f_n] \subseteq A$ 
	is integral. Thus,
	\[
		k[b_1, \ldots, b_n, h_{n+1}, \ldots, h_l] \subseteq B
	\]
	is an integral extension. If $b_1, \ldots, b_n, h_{n+1}, \ldots, h_l$
	are algebraically independent, then we are done. Otherwise, there exists 
	a non-zero polynomial $f(X_1, \ldots, X_l)$ with coefficients in $k$ such that 
	$f(b_1, \ldots, b_n, h_{n+1}, \ldots, h_l) = 0$. 
	Exactly the same as in the proof of Lemma~2, 
	\S33 \cite{Ma1986Commutative-ring-t} one can see that 
	there exist $c_1, \ldots, c_{l-1} \in k$ such that $h_l$ is integral 
	over $k[b'_1, \ldots, b'_{n}, h'_{n+1}, \ldots, h'_{l-1}]$, where
	$b'_i \defneq b_i - c_i h_l$ and $h'_i \defneq h_i - c_i h_l$. Thus, 
	\[
		k[b'_1, \ldots, b'_n, h'_{n+1}, \ldots, h'_{l-1}] \subseteq B
	\]
	is an integral extension. By induction there exists $n \leq m < l$ and
	algebraically independent elements
	$b''_1, \ldots, b''_n, h''_{n+1}, \ldots, h''_m \in B$ such that $B$ is integral 
	over $k[b''_1, \ldots, b''_n, h''_{n+1}, \ldots, h''_m]$ and 
	$b''_i - b_i, \, h''_i \in \ker(\psi)$. This proves the lemma.
\end{proof}

\begin{rem}
	From an iterative use of the lemma above we can deduce the following.
	For every affine ind-variety $X = \varinjlim X_n$ there exists
	a surjective map of the underlying sets $X \epi \AA^{\infty}$ 
	such that the restriction to every $X_n$ yields a finite surjective morphism 
	$X_n \epi \AA^{\dim X_n}$.
\end{rem}

\begin{lem}
	\lab{gen.red.fibers.lem}
	We assume that $\charac(k) = 0$.
	Let $Y$ be an irreducible affine variety and let $X$ be an
	affine scheme of finite type over $k$ that is reduced in an 
	open dense subset. If $f \colon X \to Y$ is a dominant morphism, 
	then there exists an open dense subset $U \subseteq Y$ 
	such that $f^{-1}(u)$ is reduced in an open dense subset for all $u \in U$.
\end{lem}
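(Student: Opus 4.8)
The plan is to reduce the statement to the behaviour of the generic fibre and then to spread out. Write $X = \Spec A$ and $Y = \Spec B$, where $B$ is a domain (as $Y$ is an irreducible variety) and $A$ is a finitely generated $k$-algebra; the morphism $f$ corresponds to a homomorphism $B \to A$, which is injective since $f$ is dominant and $B$ is a domain, and which stays injective after composing with $A \to A_{\mathrm{red}}$. Denote by $K = k(Y)$ the function field of $Y$ and by $S = B \setminus \{0\}$ the associated multiplicative set, so that the generic fibre is $\Spec(A \otimes_B K) = \Spec(S^{-1}A)$. Since the nilradical is a coherent ideal, the non-reduced locus $N \subseteq X$ is closed, and by hypothesis it is nowhere dense; in particular it contains no generic point of $X$. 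Two phenomena can destroy reducedness of a fibre $f^{-1}(u)$: the fibre may meet $N$, or new nilpotents may appear even over the reduced locus (as already for $x \mapsto x^2$). To separate these, I would first apply generic flatness to $A$, to $A_{\mathrm{red}}$ and to the nilradical, passing to a principal open subset of $Y$ over which all three are flat; a dimension count for the restriction $N \to Y$ then shows that $N \cap f^{-1}(u)$ is nowhere dense in $f^{-1}(u)$ for $u$ in a dense open subset. This reduces the problem to $\Spec(A_{\mathrm{red}})$, and, after discarding the images of the components of $X$ that do not dominate $Y$ (which lie in a proper closed subset of $Y$) and noting that the pairwise intersections of the remaining components are of lower dimension and hence nowhere dense in the fibres, to the case where $A$ is a domain with $B \hookrightarrow A$.

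In this reduced situation the key observation is clean: the generic fibre $\Spec(S^{-1}A)$ is a localisation of the domain $A$, hence again a domain, and in particular reduced. More intrinsically, every generic point of $X$ lies in the dense open reduced locus, and since localisation commutes with the nilradical this forces $S^{-1}A$ to be reduced at each of its minimal primes, that is, the generic fibre is reduced on a dense open subset.

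It remains to spread this out. I would consider the set $U$ of points $u \in Y$ whose fibre $f^{-1}(u)$ is reduced on a dense open subset, show that it is constructible, and use that it contains the generic point of $Y$ by the previous paragraph; a constructible subset of the irreducible variety $Y$ containing its generic point contains a dense open subset, which is the desired $U$. I expect this spreading-out step to be the main obstacle. Making it precise requires, beyond generic flatness, the standard constructibility results for fibrewise reducedness (\name{Grothendieck}, EGA IV, 9.7.7 and 9.9.3) and, crucially, the passage from reducedness to geometric reducedness of the generic fibre: the cited theorems control the locus of geometrically reduced fibres, so one needs that the generic fibre is not merely reduced but geometrically reduced over $K$. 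This is where the characteristic enters, since $K = k(Y)$ is perfect in characteristic zero but not in characteristic $p$; indeed, for a purely inseparable map such as $\Spec k[x] \to \Spec k[t]$, $t \mapsto x^p$, every fibre is a single non-reduced point, so the statement genuinely depends on this point. Over $\bar k$ the closed fibres have residue field $k$, so on $U$ geometric reducedness coincides with reducedness on a dense open subset, and undoing the reductions of the first paragraph then yields the claim for the original $X$.
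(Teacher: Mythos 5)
Your proposal is correct in characteristic zero, but it takes a genuinely different route from the paper's. The paper also opens with generic flatness (\name{Matsumura}, Theorem~24.1) and closes with the same dimension count you use: the ``bad'' closed locus, being nowhere dense in $X$ and flat over a shrunken $Y$, meets each fibre $f^{-1}(u)$ in a set of strictly smaller local dimension, hence in a nowhere dense subset. The difference is the core step. The paper produces a dense open $X' \subseteq X$ all of whose fibres over $Y$ are reduced by invoking generic smoothness (\name{Hartshorne}, Ch.~III, Cor.~10.7 together with Thm.~10.2) on the reduced locus of $X$; you instead reduce --- by additional flatness/dimension-count bookkeeping for the non-reduced locus, the components not dominating $Y$, and the pairwise intersections of components --- to the case where $\OO(X)$ is a domain, observe that the generic fibre $\Spec(S^{-1}A)$ is then a domain and hence reduced, and spread out via the constructibility of the locus of geometrically reduced fibres (EGA~IV). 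Your route costs more reduction steps than the paper's, but it buys transparency: it isolates exactly where the difficulty lies, namely in promoting reducedness of the generic fibre to \emph{geometric} reducedness over $k(Y)$.

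That transparency matters, because the ``main obstacle'' you flag at the end is not a defect of your approach --- it is a genuine gap in the lemma as stated. The paper's standing hypothesis is only that $k$ is uncountable and algebraically closed, with no restriction on the characteristic, and your Frobenius example $\Spec k[x] \to \Spec k[t]$, $t \mapsto x^p$, refutes the lemma in characteristic $p$: here $X = \AA^1$ is reduced and irreducible and $f$ is finite surjective, yet every closed fibre is a single non-reduced point, so no fibre is reduced on any dense open subset. The paper's own proof is not immune to this: Hartshorne's Corollary~III.10.7 is a characteristic-zero theorem, so the citation silently imports the hypothesis $\mathrm{char}\, k = 0$. In characteristic zero both arguments are sound (yours because $k(Y)$ is perfect, so the reduced generic fibre is geometrically reduced; the paper's via generic smoothness); in positive characteristic the statement is simply false and neither can be repaired. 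So your proposal proves the lemma wherever it is true, and your identification of the characteristic dependence is a substantive observation about the paper itself.
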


\begin{proof}
	Without loss of generality, one can assume that $f$ is flat 
	and surjective (see Theorem~14.4 (Generic freeness) 
	\cite{Ei1995Commutative-algebr}). Since $X$ is reduced in an open dense
	subset, there exists an open dense subset $X' \subseteq X$
	such that all fibers of $f |_{X'} \colon X' \to Y$ are reduced 
	(see Corollary~10.7, Ch.~III (Generic smoothness) and Theorem~10.2, Ch.~III 
	\cite{Ha1977Algebraic-geometry}; here we use $\charac(k)=0$). 
	Let $K \defneq X \setminus X'$ be endowed with the reduced induced
	closed subscheme structure of $X$ and let $g \defneq f |_K \colon K \to Y$. 
	If $g$ is not dominant, then the fibers
	of $f$ over an open dense subset are reduced and we are done. 
	Hence we can assume that $g$ is dominant. 
	Again according to Theorem~24.1 \cite{Ma1986Commutative-ring-t} there
	exists an open dense subset $U \subseteq Y$ such that
	$g |_{g^{-1}(U)} \colon g^{-1}(U) \epi U$ is flat and surjective. 
	Thus, we have for all $u \in U$ and $x \in g^{-1}(u)$
	\[
		\dim_x g^{-1}(u) = \dim_x g^{-1}(U) - \dim_u U
		< \dim_x X - \dim_u Y = \dim_x f^{-1}( u ) \, .
	\]
	It follows that $f^{-1}(u) \setminus g^{-1}(u)$ is a reduced open dense 
	subscheme of $f^{-1}(u)$ for all $u \in U$. This implies the lemma.
\end{proof}

According to Ex. 11.10 \cite{Ei1995Commutative-algebr} we have the 
following criterion for reducedness of a Noetherian ring.

\begin{lem}
	\lab{red.crit.lem}
	A Noetherian ring $A$ is reduced if and only if
	\begin{itemize}
		\item[$(R_0)$] the localization of $A$ at each prime ideal of 
				       height $0$ is regular,
		\item[$(S_1)$] $A$ has no embedded associated prime ideals.
	\end{itemize}
\end{lem}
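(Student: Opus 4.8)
The plan is to reduce both implications to the theory of associated primes and localization for Noetherian rings, after two harmless reinterpretations of the hypotheses. First I would note that $(R_0)$ says exactly that $A_\pp$ is a field for every minimal prime $\pp$, since a regular local ring of dimension zero is a field; and that $(S_1)$, the absence of embedded primes, says exactly $\operatorname{Ass}(A) = \operatorname{Min}(A)$. I would also recall the standard facts that a Noetherian ring has only finitely many minimal primes, that the nilradical $N$ is their intersection, and that the set of zero-divisors of $A$ equals $\bigcup_{\pp \in \operatorname{Ass}(A)} \pp$.

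For the implication ``reduced $\Rightarrow (R_0)$ and $(S_1)$'', condition $(R_0)$ is immediate: localizing the reduced ring $A$ at a minimal prime $\pp$ yields a reduced local ring whose unique prime $\pp A_\pp$ therefore equals its nilradical, hence is zero, so $A_\pp$ is a field. For $(S_1)$ I would prove the general fact that $\operatorname{Ass}(A) = \operatorname{Min}(A)$ for reduced $A$: if $\pp = \operatorname{ann}(a)$ with $a \neq 0$, reducedness gives a minimal prime $\qq$ with $a \notin \qq$, and then every $b \in \pp$ satisfies $ab = 0 \in \qq$, forcing $b \in \qq$; thus $\pp \subseteq \qq$, so $\pp = \qq$ is minimal and there are no embedded primes.

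For the converse --- the direction that carries the content --- I assume $(R_0)$ and $(S_1)$ and take a nilpotent $a \in N$, aiming to show $a = 0$. For each minimal prime $\pp_i$ the image of $a$ in the field $A_{\pp_i}$ (by $(R_0)$) is a nilpotent, hence zero, so some $s_i \notin \pp_i$ satisfies $s_i a = 0$; thus $\operatorname{ann}(a) \not\subseteq \pp_i$ for every $i$. As there are only finitely many minimal primes, prime avoidance gives $\operatorname{ann}(a) \not\subseteq \bigcup_i \pp_i$. By $(S_1)$ this union is precisely the set of zero-divisors $\bigcup_{\pp \in \operatorname{Ass}(A)} \pp$, so $\operatorname{ann}(a)$ contains a non-zero-divisor $s$ with $sa = 0$, whence $a = 0$. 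Therefore $N = 0$ and $A$ is reduced.

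The main obstacle is really just the clean assembly of the converse: one must use $(R_0)$ to annihilate the nilpotent after localizing at each minimal prime separately, and then use $(S_1)$ --- through the identifications $(S_1) \Leftrightarrow \operatorname{Ass}(A) = \operatorname{Min}(A)$ and ``zero-divisors $= \bigcup_{\pp \in \operatorname{Ass}(A)} \pp$'' --- to guarantee that a single annihilator escapes all minimal primes at once and is a non-zero-divisor. Everything else is a direct appeal to standard facts about Noetherian rings.
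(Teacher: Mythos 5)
Your proof is correct. Note, however, that the paper does not prove this lemma at all: it is stated as a quoted fact, with a pointer to Exercise~11.10 of Eisenbud's \emph{Commutative algebra}, so there is no in-paper argument to compare against. Your write-up is essentially the standard solution to that exercise, and every step checks out: the identification of $(R_0)$ with ``$A_\pp$ is a field for $\pp$ minimal'' (a zero-dimensional regular local ring is a field), the identification of $(S_1)$ with $\operatorname{Ass}(A)=\operatorname{Min}(A)$, the forward direction via ``localization of reduced is reduced'' and the argument that an associated prime $\operatorname{ann}(a)$ of a reduced ring lies inside (hence equals) some minimal prime avoiding $a$, and the converse via killing a nilpotent $a$ in each field $A_{\pp_i}$, then using prime avoidance over the finitely many minimal primes together with ``zero-divisors $=\bigcup_{\pp\in\operatorname{Ass}(A)}\pp$'' to produce a non-zero-divisor annihilating $a$. (In the last step you only need $\operatorname{Ass}(A)\subseteq\operatorname{Min}(A)$, which is exactly what ``no embedded primes'' gives, so the containment $\operatorname{Min}\subseteq\operatorname{Ass}$ is not even required.) In effect, you have supplied the proof the paper delegates to the literature.
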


One can see that condition $(R_0)$ is satisfied for
a Noetherian ring $A$ if $\Spec(A)$ is reduced in an open dense subset.
Thus we get the following 

\begin{lem}
	\lab{red.lem}
	Let $X$ be a Noetherian affine scheme that is
	reduced in an open dense subset. If $\OO_{X, x}$
	satisfies $(S_1)$ for a point $x \in X$, then $\OO_{X, x}$ is reduced.
\end{lem}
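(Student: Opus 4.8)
The plan is to apply the reducedness criterion of Lemma~\ref{red.crit.lem} to the Noetherian local ring $R \defneq \OO_{X,x}$ (it is Noetherian, being a localization of the Noetherian ring $\OO(X)$). Writing $A \defneq \OO(X)$ and letting $\mm \subseteq A$ be the maximal ideal corresponding to $x$, we have $R = A_\mm$. By Lemma~\ref{red.crit.lem} it suffices to verify that $R$ satisfies both $(R_0)$ and $(S_1)$. The condition $(S_1)$ holds by hypothesis, so the entire argument reduces to establishing $(R_0)$ for $R$.

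For this I would first record that the ambient ring $A$ satisfies $(R_0)$. Indeed, $\Spec(A) = X$ is reduced in an open dense subset by assumption, so the remark preceding the lemma applies and yields $(R_0)$ for $A$. It then remains to transport $(R_0)$ from $A$ to its localization $R = A_\mm$, and the key observation is that $(R_0)$ is inherited by localizations. Concretely, the minimal primes of $R = A_\mm$ are exactly the ideals $\pp A_\mm$, where $\pp$ ranges over the minimal primes of $A$ contained in $\mm$, and for each such $\pp$ one has the canonical identification $R_{\pp A_\mm} \cong A_\pp$. Since $A$ satisfies $(R_0)$, every $A_\pp$ is regular (a field), whence $R_{\pp A_\mm}$ is regular for every minimal prime $\pp A_\mm$ of $R$. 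Thus $R$ satisfies $(R_0)$, and together with the hypothesis $(S_1)$, Lemma~\ref{red.crit.lem} shows that $R = \OO_{X,x}$ is reduced.

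The only point requiring care—and the one I expect to be the main technical obstacle—is the passage through the remark, i.e.\ that ``reduced in an open dense subset'' forces reducedness at every generic point of $X$ (equivalently $(R_0)$ for $A$). Here one argues that a dense open subset necessarily contains the generic point of each irreducible component of $X$: its complement is a proper closed subset and therefore cannot contain an entire component, as otherwise the dense open set would miss the nonempty open subset of that component lying outside all other components. Once this is granted, the localization step is purely formal and the conclusion follows.
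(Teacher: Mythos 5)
Your proposal is correct and takes essentially the same approach as the paper: the paper obtains Lemma~\ref{red.lem} as an immediate consequence of Lemma~\ref{red.crit.lem} together with the observation that reducedness on a dense open subset gives $(R_0)$, leaving the details implicit. Your write-up simply fills in exactly those implicit steps (dense open sets contain the generic points, hence $(R_0)$ holds for $\OO(X)$, and $(R_0)$ passes to the localization $\OO_{X,x}$), which is the intended argument.
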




Now we have the preliminary results to prove  
Proposition~\ref{main.prop}. The strategy is as follows. First
we construct $0 \neq f_n \in \OO(X_n)$ such that
$f_n(x) = 0$, $f_n |_{X_{n-1}} = f_{n-1}^2$ and 
$\OO_{X_n, x} / f_n \OO_{X_n, x}$ is reduced. 
The main part of the proof is to show the reducedness and for that matter
we use the condition $(S_2)$ of
the local ring $\OO_{X_n, x}$. Then we define
$Y \defneq \bigcup_n V_{X_n}(f_n)$. It follows that $Y$ is closed in $X$
with respect to the ind-topology.
Afterwards, we prove that $Y$ is not closed in $X$
with respect to the Zariski topology.
For that purpose, we take 
$\varphi = (\varphi_n) \in \OO(X) = \varprojlim \OO(X_n)$ that
vanishes on $Y$, and we show that $\varphi_n$ vanishes
also on all irreducible components of $X_n$ passing through $x$. 
The latter we deduce from the fact that 
\[
	\varphi_n = \varphi_{n+i} |_{X_n} \in f_{n+i} |_{X_n} \OO_{X_n, x} 
	= f_n^{2^i} \OO_{X_n, x}
\]
for all $i \geq 0$ and \name{Krull}'s Intersection Theorem. 

\begin{proof}[Proof of Proposition~\ref{main.prop}]
	For the sake of simpler notation, we assume that
	$\OO_{X_n, x}$ satisfies $(S_2)$ and $\dim_x X_n = n$ for all $n$. 
	Let $X'_n$ be the union of all irreducible components of $X_n$ 
	containing $x$ and let $W_n$ be the union of all irreducible components 
	of all $X_i$ with $i \leq n$, not containing $x$ and of strictly smaller 
	dimension than $n$. Then, 
	$X'_1 \cup W_1 \hookrightarrow  X'_2 \cup W_2 \hookrightarrow \ldots$
	is an equivalent filtration of $X$ to 
	$X_1 \hookrightarrow X_2 \hookrightarrow \ldots \,$, since 
	$\dim_x X_n \to \infty$. Thus, we can further impose that 
	$\dim_x X_n = \dim X'_n = \dim X_n$ and $\dim_p X_n < \dim X_n$
	for all $p \notin X'_n$. As $\OO_{X_n, x}$ satisfies $(S_2)$, it follows
	from Corollary~5.10.9 \cite{Gr1965Elements-de-geomet-IV} that
	$X'_n$ is equidimensional.
	
	Now, we construct the $0 \neq f_n \in \OO(X_n)$. 
	From Lemma~\ref{finite.lem} it follows that we can choose
	algebraically independent elements $x_1, \ldots, x_n \in \OO(X_n)$ 
	such that $\OO(X_n)$ is finite over $k[x_1, \ldots, x_n]$ and 
	$x_n$ restricted to $X_{n-1}$ is zero. We can assume that the 
	finite morphism $X_n \epi \AA^n$ induced by
	$k[x_1, \ldots, x_n] \subseteq \OO(X_n)$ sends $x$ to $0 \in \AA^n$. Since 
	$\dim_p X_n < \dim X_n$ for all $p \notin X'_n$ and $X'_n$ is 
	equidimensional, it follows that 
	\begin{equation}
		\label{injectivity}
		\tag{$\ast$}
		k[x_1, \ldots, x_n] \hookrightarrow \OO(X_n) \epi
		\OO(K) \quad  \textrm{is injective}
	\end{equation}
	for all irreducible components $K$ of $X'_n$. Let us define
	\[
		f_1 \defneq  c_1 x_1 \, \quad{and} \quad 
		f_{n+1} \defneq f_n^2 + c_{n+1} x_{n+1} \, , 
	\]
	where $c_1, c_2, \ldots \in k$, not all equal to zero. It follows that 
	$f_n(x) = 0$ and $f_{n+1} |_{X_n} = f_n^2$. The aim is to prove that
	$c_1, c_2, \ldots \in k$ can be chosen such that not all are equal to zero
	and  $\OO_{X_n, x} / f_n \OO_{X_n, x}$ is reduced for $n > 1$. 
	Consider the morphism
	\[
		\psi_nÊ\colon Z_n \longrightarrow \AA^n \, ,
	\]
	where $Z_n$ is the affine scheme with coordinate ring
	\[
		S_n \defneq \OO(X'_n)[c_1, \ldots, c_n] / (f_n)
	\]
	and $\psi_n$ is the restriction of the canonical projection 
	$X'_n \times \AA^n \epi \AA^n$ to the closed subscheme $Z_n$.
	If $(c_1, \ldots, c_n)$ is fixed, then $\OO_{X_n, x} / f_n \OO_{X_n, x}$
	is the local ring of the fiber $\psi_n^{-1}(c_1, \ldots, c_n)$ in the point
	$(x, c_1, \ldots, c_n) \in Z_n$. For that reason we will study the fibers of
	the morphism $\psi_n \colon Z_n \to \AA^n$.
	We claim that $Z_n$ is reduced in an open dense 
	subset for $n > 1$. To prove this claim, we mention first that
	\[
		(S_n)_{x_n} 
		\simeq \OO(X'_n)_{x_n}[c_1, \ldots, c_n] / (f_{n-1}^2 + c_n x_n) 
		\simeq \OO(X'_n)_{x_n}[c_1, \ldots, c_{n-1}]
	\] 
	is reduced. Let $R_n \defneq k[x_1, \ldots, x_n][c_1, \ldots, c_n] / (f_n)$.
	It follows that the morphisms $\Spec(S_n) \epi \Spec(R_n)$ and 
	$\Spec(S_n / (x_n)) \epi \Spec(R_n / (x_n))$ are both finite and surjective.
	As $\dim R_n / (x_n) < \dim R_n$ for $n > 1$ we get
	$\dim S_n / (x_n) < \dim S_n$. 
	Since $X'_n$ is equidimensional
	one can deduce from (\ref{injectivity}) that $Z_n$ is 
	equidimensional. Hence, $\Spec((S_n)_{x_n}) \subseteq Z_n$
	is an open dense reduced subscheme.

	Since $\{ x \} \times \AA^n$ is contained in $Z_n$, it follows that 
	$\psi_n$ is surjective. 
	For $n > 1$ there exists an open dense subset 
	$U_n \subseteq \AA^n$ such that 
	\[
		\psi_n |_{\psi_n^{-1}(U_n)} \colon \psi_n^{-1}(U_n)
		\epi U_n
	\]
	is surjective and flat, and every fiber is reduced in an open dense subset
	(see Lemma~\ref{gen.red.fibers.lem}
	and Theorem~24.1 \cite{Ma1986Commutative-ring-t}).
	With the aid of (\ref{injectivity}) it follows that $f_n$ is an 
	$\OO_{X_n, x}$-regular sequence for every choice $(c_1, \ldots, c_n) \in U_n$.
	Since $\OO_{X_n, x}$ satisfies $(S_2)$, we get
	from Corollary~5.7.6 \cite{Gr1965Elements-de-geomet-IV}
	that $\OO_{X_n, x} / f_n \OO_{X_n, x}$ satisfies $(S_1)$.
	But as $\psi_n^{-1}(c_1, \ldots, c_n)$ is reduced in an open dense subset, 
	it follows from Lemma~\ref{red.lem} that it is reduced in 
	the point $(x, c_1, \ldots, c_n)$. Hence, for $n > 1$ it follows that
	$\OO_{X_n, x} / f_n \OO_{X_n, x}$ is reduced if we choose 
	$(c_1, \ldots, c_n) \in U_n$.
	For $i \geq n$ let $\pi_n^i: \AA^i \epi \AA^n$ be the projection onto 
	the first $n$ components. As the field $k$ is uncountable, 
	one can choose inductively
	\[
		0 \neq c_1 \in \bigcap_{i \geq 1} \pi_1^i(U_i) \, , \quad
		(c_1, \ldots, c_n, c_{n+1}) \in 
		\bigcap_{i \geq n+1} 
		\pi_{n+1}^i( U_i) \cap \{(c_1, \ldots, c_n)\} \times \AA^1 \, .
	\]
	Hence, $(c_1, \ldots, c_n) \in U_n$ for all $n > 1$ and not all 
	$c_1, c_2, \ldots$ are equal to zero.
	This finishes the construction of the $f_n$.
	
	Let us define $Y \defneq \bigcup_n V_{X_n}(f_n)$.
	Since $f_{n+1} |_{X_n} = f_n^2$ for all $n$, $Y$ satisfies i).
	Take any $\varphi = (\varphi_n) \in \varprojlim \OO(X_n)$ that vanishes
	on $Y$. We claim that $\varphi |_{X'} = 0$, where
	$X' \defneq \bigcup_n X'_n$. It is enough to prove that
	$\varphi_n = 0$ in $\OO_{X_n, x}$. Since
	$\varphi_m |_{Y_m} = 0$ and
	$\OO_{X_m, x} / f_m \OO_{X_m, x}$ is reduced, 
	it follows that $\varphi_m \in f_m \OO_{X_m, x}$. 
	Using $f_{m+1} |_{X_m} = f_m^2$ again, we get by induction
	\[
		\varphi_n = \varphi_{n + i} |_{X_n}
		\in f_n^{2^i} \OO_{X_n, x} \quad \textrm{for all $i \geq 0$} \, , \  n > 1 \, .
	\]
	But according to \name{Krull}'s Intersection Theorem
	(see Theorem~8.10 \cite{Ma1986Commutative-ring-t}),
	we have $\bigcap_{i \geq 0} f_n^{i} \OO_{X_n, x} = 0$, hence
	$\varphi_n = 0$ in $\OO_{X_n, x}$. 
	Since $f_n |_{X'_n} \neq 0$ (cf. (\ref{injectivity})), we get
	$X' \cup Y \supsetneq Y$. Thus $Y$ satisfies ii) according to the 
	afore mentioned claim.
\end{proof}

The following example is a special case of the construction in the proof
of Proposition~\ref{main.prop}. We mention it here, since we will use it in
future examples.
	
\begin{exa}[cf. Ex. 4.1.E, IV. in \cite{Ku2002Kac-Moody-groups-t}]
	\lab{top.exa}
	Let $f_n \in k[x_1, \ldots, x_{n}] = \OO(\AA^n)$ be recursively defined as
	\[
		f_1 \defneq x_1 \, , \quad f_{n+1} \defneq f_n^2 + x_{n+1} \, . 
	\]
	Then $\bigcup_n V_{\AA^n}(f_n)$ is a proper closed subset of the 
	infinite-dimensional affine space $\AA^\infty = \varinjlim \AA^n$ with respect to 
	the ind-topology, but it is dense in $\AA^\infty$ with respect to 
	the Zariski topology. 
\end{exa}

Let $\G$ be the group of polynomial automorphisms
of the affine space $\AA^n$, where $n$ is a fixed number $\geq 2$.
We prove in the next example that the ind-topology and the 
Zariski topology on $\G$ differ if we consider $\G$ as an affine ind-variety via 
the filtration given by the degree of an automorphism.

\begin{exa}
	\lab{auto.exa}
	First, we define on $\G$ a filtration of affine varieties (via the degree).
	Let $\E$ be the set of polynomial endomorphisms of the affine 
	space $\AA^n$ and let $\E_d$ be the subset of all 
	$\varphi \in \E$ of degree $\leq d$.
	Denote by $U_d \subseteq \E_d$
	the subset of all $\varphi \in \E_d$ such that $\Jac(\varphi) \in k^\ast$.
	One can see that $U_d \subseteq \E_d$ is a locally closed subset
	and it inherits the structure of an affine variety from $\E_d$.
	With Corollary 0.2 \cite{Ka1979Automorphism-group} and the estimate 
	of the degree of the inverse of an automorphism due to \name{Gabber} 
	(see Corollary 1.4 in \cite{BaCoWr1982The-Jacobian-conje})
	one can deduce that $\G_d \subseteq U_d$ is a closed subset. Thus 
	$\G_d$ is locally closed in $\E_d$ and it
	inherits the structure of an affine variety from $\E_d$. Moreover, one
	can see that $\G_d$ is closed in $\G_{d+1}$. 
	In the following, we consider $\G$ as an affine ind-variety
	via the filtration $\G_1 \hookrightarrow \G_2 \hookrightarrow \ldots \,$
	of affine varieties.
	
	We claim that the ind-topology and the Zariski topology on $\G$
	differ. Consider the subset
	\[
		M \defneq \{ \, (x_1 + p, x_2, \ldots, x_n) \in \G \ | \ p \in k[x_n] \, \} 
		\subseteq \G \, .
	\]
	It is closed in $\G$ with respect to the ind-topology.
	We consider $M$ as an affine ind-variety via 
	$M \defneq \varinjlim M \cap G_d$ and thus 
	$M \simeq \AA^\infty$ as affine ind-varieties.
	According to Example \ref{top.exa} there exists a proper 
	subset $Y \subsetneq M$ that is closed with respect to the 
	ind-topology, but it is dense in $M$
	with respect to the Zariski topology. Hence, every regular function
	on $\G$ vanishing on $Y$, vanishes also on $M$. This implies the claim.
\end{exa}

\begin{rem}
	A similar argument as in Example \ref{auto.exa} shows that
	the ind-topology and the Zariski topology differ on $\GL_n(k[t])$
	and also on $\SL_n(k[t])$.
\end{rem}

Next, we give a corollary to Proposition~\ref{main.prop} which implies
Corollary~\ref{B.cor}. Before we state the corollary, we introduce the
following notation. For any ind-variety $X = \varinjlim X_n$ we choose connected
components $X_n^{i}$ of $X_n$, $i = 1, \ldots, k_n$, such that
\[
	X_n = \bigcup_{i = 1}^{k_n} X_n^{i} \quad \textrm{and} \quad
	X_n^{i} \subseteq X_{n+1}^{i} \ \textrm{for all $i = 1, \ldots, k_n$}
\]
(it can be that $X_n^{i} = X_n^{j}$ for $i \neq j$). We remark that
the decomposition of an ind-variety into connected components is the
same for the ind-topology and the Zariski topology.

\begin{cor}
	\lab{main.cor}
	We assume that $\charac(k)=0$.
	Let $X = \varinjlim X_n$ be an affine ind-variety such that for $i$ fixed, 
	the number of irreducible components of $X_n^{i}$ 
	is bounded for all $n$. Moreover, assume that
	$\OO(X_n)$ satisfies $(S_2)$ for infinitely many $n$. Then the following
	statements are equivalent:
	\begin{itemize}
		\item[i)] The ind-topology and the Zariski topology on $X$ coincide.
		\item[ii)] For all $x \in X$ the local dimension of $X_n$ at $x$
		is bounded for all $n$.
		\item[iii)] Every connected component of $X$ is contained in some
		$X_n$.
	\end{itemize}
\end{cor}

\begin{proof}
	Every connected component of $X$ is equal to some
	$X^i \defneq \bigcup_n X_n^i$. \\
	i) $\Rightarrow$ ii): This follows from Proposition \ref{main.prop}. \\
	ii) $\Rightarrow$ iii): As $X_n^i$ satisfies $(S_2)$ and is connected,
	$X_n^i$ is equidimensional 
	(see Corollary~5.10.9 \cite{Gr1965Elements-de-geomet-IV}). Thus, 
	$X_n^i = X_{n+1}^i$ for $n$ large enough, as the number of irreducible
	components of $X_n^i$ is bounded for all $n$. Thus, $X^i \subseteq X_n$ 
	for some $n$. \\
	iii) $\Rightarrow$ i): This follows from the fact that every 
	connected component of $X$ is closed and open with respect to the 
	Zariski topology.
\end{proof}

As a contrast to Proposition~\ref{main.prop}, the two topologies coincide 
if the affine ind-variety is ``locally constant'' with respect to the Zariski topology.
The following proposition coincides with Proposition~\ref{C.prop}.

\begin{prop}
	\lab{contrast.prop}
	Let $X = \varinjlim X_n$ be an affine ind-variety. 
	Assume that every $x \in X$ has a Zariski 
	open neighbourhood $U_x \subseteq X$ 
	such that $U_x \cap  X_n = U_x \cap X_{n+1}$ for all sufficiently large $n$. 
	Then the two topologies on $X$ coincide.
\end{prop}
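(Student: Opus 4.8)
The plan is to prove that the ind-topology and the Zariski topology coincide under the "locally constant" hypothesis. Since the ind-topology is always finer than the Zariski topology (as noted in the text), it suffices to show that every set closed in the ind-topology is also closed in the Zariski topology. So I would fix a subset $Y \subseteq X$ that is closed in the ind-topology, meaning $Y \cap X_n$ is closed in $X_n$ for every $n$, and produce regular functions on $X$ whose common zero set is exactly $Y$.

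The key idea is to exploit the local constancy to build global regular functions as inverse limits in $\OO(X) = \varprojlim \OO(X_n)$. First I would fix a point $z \in X \setminus Y$ and try to produce a single regular function $\varphi \in \OO(X)$ with $\varphi|_Y = 0$ but $\varphi(z) \neq 0$; exhibiting such a function for every $z \notin Y$ shows that $Y$ is Zariski closed. Using the hypothesis, choose a Zariski open neighbourhood $U_z$ of $z$ with $U_z \cap X_n = U_z \cap X_{n+1}$ for all $n \geq N$, for some $N$. On the level of $X_N$, since $Y \cap X_N$ is a closed subset of the affine variety $X_N$ not containing $z$, standard algebraic geometry yields a regular function $\varphi_N \in \OO(X_N)$ vanishing on $Y \cap X_N$ with $\varphi_N(z) \neq 0$. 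The main obstacle will be extending $\varphi_N$ consistently to a compatible system $(\varphi_n)_n \in \varprojlim \OO(X_n)$ that continues to vanish on all of $Y$, since the restriction maps $\OO(X_{n+1}) \to \OO(X_n)$ are surjective but an arbitrary lift of $\varphi_n$ need not vanish on $Y \cap X_{n+1}$.

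Here is where local constancy does the essential work. The condition $U_z \cap X_n = U_z \cap X_{n+1}$ for large $n$ means that, near $z$, the filtration stabilizes, so the embedding $X_n \hookrightarrow X_{n+1}$ is "locally an isomorphism" around $z$. I would localize the whole problem to the open set $U_z$: one can arrange the chosen function $\varphi_N$ to be supported appropriately so that lifting it up the filtration costs nothing on $U_z$, while the portion of $Y$ outside $U_z$ is handled by a finite-type argument since each $Y \cap X_n$ is closed. Concretely, I expect to cover $z$ by a basic open set inside $U_z$, and use that the stabilization $U_z \cap X_n = U_z \cap X_{n+1}$ forces the relevant ideals of $\OO(X_n)$ vanishing on $Y$ to be compatible under restriction, so that a lift vanishing on $Y \cap X_n$ automatically extends to one vanishing on $Y \cap X_{n+1}$ near $z$. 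Assembling these lifts gives the desired $\varphi = (\varphi_n) \in \OO(X)$.

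The technical heart of the argument is thus the compatibility of the lifts, and the proof will hinge on a careful local analysis showing that the stabilizing open neighbourhoods let one inductively extend vanishing functions up the tower without obstruction. Once such a $\varphi$ is produced for each $z \notin Y$, the set $\{\, \mm \in \Spm(\OO(X)) \mid \varphi(\mm) = 0 \text{ for all such } \varphi \,\}$ equals $Y$, exhibiting $Y$ as a Zariski-closed set of the form $V_{\Spm(\OO(X))}(E)$. Combined with the always-valid inclusion that Zariski-closed sets are ind-closed, this yields the equality of the two topologies. I would expect the gluing of the local functions across the (possibly infinitely many) neighbourhoods $U_z$ to require some care, but since we only need one function per excluded point rather than a global generating set, this should go through cleanly.
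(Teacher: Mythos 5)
Your overall strategy---separating each $z \notin Y$ from $Y$ by a global regular function vanishing on $Y$ and nonzero at $z$---is a legitimate reformulation of Zariski-closedness, but the step you yourself call the technical heart contains a genuine gap. The claim that the stabilization $U_z \cap X_n = U_z \cap X_{n+1}$ ``forces the relevant ideals \dots to be compatible under restriction, so that a lift vanishing on $Y \cap X_n$ automatically extends to one vanishing on $Y \cap X_{n+1}$'' is precisely what fails in general, and nothing in your sketch proves it. Two concrete problems. First, stabilization is a statement about point sets near $z$; it gives no control over ideals. The restriction maps $\OO(X_{n+1}) \to \OO(X_n)$ are surjective, but they need not map the ideal of $Y \cap X_{n+1}$ onto the ideal of $Y \cap X_n$: the image has the right zero set but may be non-radical, and this discrepancy is exactly the engine of Example~\ref{top.exa}, where the ideal $(f_{n+1})$ restricts to $(f_n^2)$, so the images shrink to $\bigcap_i (f_n^{2^i}) = 0$ by \name{Krull}'s intersection theorem---that is why ind-closed sets are not Zariski closed in general. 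Second, if you implement ``supported in $U_z$'' by gluing $\varphi_n$ on $X_n$ with $0$ on the closed set $D_{n+1} := X_{n+1} \setminus U_z$ (these do cover $X_{n+1}$, by stabilization), the glued function is regular only when $\varphi_n$ vanishes on the \emph{scheme-theoretic} intersection $X_n \cap D_{n+1}$, which can be non-reduced even under your hypothesis: take $X_n$ the line $V(v) \subseteq \AA^2$, $X_{n+1} = V(v(v-u^2))$ the union of that line with a tangent parabola, and $U_z = X_{n+1} \setminus V(v - u^2)$; then $u|_{X_n}$ vanishes at the set-theoretic intersection but does not extend by zero to $X_{n+1}$. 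So set-theoretic vanishing of $\varphi_n$ on $(Y \cap X_n) \cup (X_n \setminus U_z)$ is not enough, the required order of vanishing can grow with $n$, and moreover the portion of $Y \cap X_{n+1}$ outside $U_z$ is new at every stage and is dismissed with an unspecified ``finite-type argument.''

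Both problems are avoidable, and this is where the hypothesis should really be used. The paper's proof is soft and purely topological: closedness is a local property in any topological space, and by stabilization $Y \cap U_x = (Y \cap X_{N}) \cap U_x$ is Zariski closed in $U_x$ (using that $\OO(X) \to \OO(X_N)$ is surjective, so the subspace Zariski topology on $X_N$ is its variety topology); since the $U_x$ form a Zariski-open cover, $Y$ is Zariski closed. If you want to keep your functional approach, the repair is a product trick rather than an inductive lift: choose $e \in \OO(X)$ vanishing on $X \setminus U_z$ with $e(z) \neq 0$ (possible because $U_z$ is by hypothesis Zariski open, so $X \setminus U_z$ is the zero set of some $E \subseteq \OO(X)$), choose \emph{any} lift $f \in \OO(X)$ of your $\varphi_N$ (possible since $\OO(X) = \varprojlim \OO(X_n) \to \OO(X_N)$ is surjective), and set $\varphi := ef$. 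Then $\varphi(z) \neq 0$, and $\varphi$ vanishes on $Y$: points of $Y$ outside $U_z$ are killed by $e$, while points of $Y \cap U_z$ lie in $Y \cap X_N$ by stabilization and are killed by $f$. No lifting of vanishing conditions up the tower is ever needed; attempting that lifting is where your argument breaks.
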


\begin{proof}
	Let $Y \subseteq X$ be a closed subset with respect to the 
	ind-topology. One can see that $Y \cap U_x$ is closed in $U_x$ with 
	respect to the Zariski topology for all $x \in X$. This proves that $Y$
	is closed in $X$ with respect to the Zariski topology.
\end{proof}

The following example is an application of the proposition above. We construct
a proper ind-variety (i.e., it is not a variety) such that the ind-topology
and the Zariski topology coincide and moreover, it is connected.

\begin{exa}
	Let $L_n$ be defined as
	\[
		L_n \defneq V_{\AA^n}(x_1 - 1, x_2 - 1, \ldots, x_{n-1} - 1)
		\subseteq \AA^n \, .
	\]
	Remark that $L_n \cap L_{n+1} = \{ (1, \ldots, 1) \} \subseteq \AA^n$ 
	for all $n$ and
	$L_n \cap L_m = \varnothing$ for all $n$, $m$ with $|n-m| \geq 2$.
	Let $X \defneq \varinjlim X_n$ where 
	$X_n \defneq L_1 \cup \ldots \cup L_n \subseteq \AA^n$.
	It follows that $X \subseteq \AA^{\infty}$ is a closed connected 
	subset in the ind-topology. We claim that
	the ind-topology and the Zariski topology on $X$ coincide. According
	to the proposition above it is enough
	to show that $X$ is ``locally constant'' with respect to the Zariski topology.
	Let $x \in X$. Then there exists $N$ such that $x \in L_N$, but 
	$x \notin L_{N+1}$. Let 
	$U_x \defneq X \setminus V_{\AA^\infty}(f_1, \ldots, f_N) \subseteq X$
	where $f_i \in \OO(\AA^\infty)$ is given by
	\[
		  f_i |_{\AA^n} = x_i - 1 \in k[x_1, \ldots, x_n]
		  \quad \textrm{for all $n \geq N$}.
	\]
	Thus, $U_x \subseteq X$ is a Zariski open neighbourhood of $x$. 
	Moreover, for all $n > N$ we have 
	$L_n \subseteq V_{\AA^\infty}(f_1, \ldots, f_N)$. Hence we have
	$U_x \cap X_n = U_x \cap X_{n+1}$ for all $n \geq N$.
\end{exa}

As remarked before Corollary~\ref{main.cor}, 
connectedness of an affine ind-variety is the same for both topologies. But this
is no longer true for irreducibility as we will see
in the next section (see Example~\ref{irred1.exa}).

\subsection{Irreducibility via the coordinate ring}
\lab{IrredCoord.sec}

It is well known that an affine variety $X$ is irreducible if and only if
the coordinate ring $\OO(X)$ is an integral domain. 
This statement remains true for affine ind-varieties endowed
with the Zariski topology. The proof is completely analogous
to the proof for affine varieties. 
In the case of the ind-topology it is still true that
$\OO(X)$ is an integral domain if $X$ is irreducible, as the ind-topology
is finer than the Zariski topology.
But the converse is in general false. In the following we give an example
of an affine ind-variety $X$, which is reducible in the ind-topology, but
its coordinate ring $\OO(X)$ is an integral domain and thus it is irreducible
in the Zariski topology.

\begin{exa}
	\lab{irred1.exa}
	Throughout this example we work in the ind-topology.
	Let $g_n \in k[x_1, \ldots, x_{n}]$ be defined as
	\[
		g_n \defneq x_1 + \ldots + x_n \, ,
	\]
	and let $f_n$ be defined as in Example~\ref{top.exa}.
	By construction, $f_n$ and $g_n$ are irreducible polynomials.
	The affine ind-variety 
	$X \defneq \varinjlim (V_{\AA^n}(f_n) \cup V_{\AA^n}(g_n))$
	decomposes into the proper closed subsets $\bigcup_n V_{\AA^n}(f_n)$
	and $\bigcup_n V_{\AA^n}(g_n))$ and thus $X$ is reducible.
	We claim that $\OO(X) = \varprojlim k[x_1, \ldots, x_n] / (f_n g_n)$ 
	is an integral domain. Assume towards a contradiction that there exist 
	$(\varphi_n), (\psi_n) \in \prod_{n=1}^{\infty} k[x_1, \ldots, x_n]$ such that
	$(\varphi_n)$ and $(\psi_n)$ define non-zero elements in $\OO(X)$, but 
	$(\varphi_n \psi_n)$ defines zero in $\OO(X)$. 
	By definition, there exists $\alpha_n \in k[x_1, \ldots, x_n]$ such that
	\begin{equation}
		\label{eq1}
		\tag{$\ast$}
		\varphi_{n+1}(x_1, \ldots, x_n, 0) = \varphi_n + f_n g_n \alpha_n 
		\quad \textrm{for all $n$} \, .
	\end{equation}
	Since $(\varphi_n \psi_n)$ defines zero in $\OO(X)$, it 
	follows that $f_n g_n$ divides $\varphi_n \psi_n$ for $n > 0$.
	Hence we can assume without loss of generality that 
	$f_n$ divides $\varphi_n$ for
	infinitely many $n$. The equation (\ref{eq1}) and the 
	definition of $f_{n+1}$ shows that $f_n$ divides $\varphi_n$ for all $n$. 
	Since $(\varphi_n) \neq 0$ in $\OO(X)$ there exists $N > 1$ such that 
	$g_N$ does not divide $\varphi_N$.
	Let $\rho_n \in k[x_1, \ldots, x_n]$ such that $\varphi_n = f_n \rho_n$. 
	It follows that $g_N$ does not divide $\rho_N$, in particular $\rho_N \neq 0$.
	According to (\ref{eq1}) and the definition of $f_{n+1}$ we have
	\begin{equation}
		\label{eq2}
		\tag{$\ast \ast$}
		\rho_n = f_n \cdot \rho_{n+1}(x_1, \ldots, x_n, 0) - g_n \cdot \alpha_n 
		\quad \textrm{for all $n$} \, .
	\end{equation}
	Since $g_N$ does not divide $\rho_N$ it follows that there exists 
	$p \in \AA^N$ with $g_N(p) = 0$ and $\rho_N(p) \neq 0$. 
	Let $\gamma_n \colon \AA^1 \to \AA^n$ be the curve defined by 
	$\gamma_n(t) = (p, 0, \ldots, 0) + (t, -t, 0, \ldots, 0)$ for $n \geq N$.
	Since $g_n(\gamma_n(t)) = 0$ it follows from (\ref{eq2})
	that $\rho_n(\gamma_n(t)) = f_n(\gamma_n(t)) \rho_{n+1}(\gamma_{n+1}(t))$.
	This implies
	\[
		0 \neq 
		\rho_N(\gamma_N(t)) = \left( \prod_{i = N}^{n-1} f_i(\gamma_i(t)) \right)
		\cdot
		\rho_n(\gamma_n(t)) 
		\quad \textrm{for all $n \geq N$} \, .
	\]
	Since $f_i(\gamma_i(t))$ is a polynomial of degree $2^{i-1}$ for all 
	$i \geq N$, it follows that the polynomial 
	$\rho_N(\gamma_N(t))$ is of unbounded degree, a contradiction. 
\end{exa}

\subsection{Irreducibility via the filtration}
\lab{IrredFilt.sec}

One would like to give a criterion for connectedness or irreducibility 
in terms of the filtration $X_1 \hookrightarrow X_2 \hookrightarrow \ldots$ 
of the affine ind-variety.
In the case of connectedness \name{Shafarevich} gave a nice description
via the filtration (see Proposition~2, \cite{Sh1981On-some-infinite-d}) 
and \name{Kambayashi} 
gave a proof for it (see Proposition~2.4, \cite{Ka1996Pro-affine-algebra})
(the proof works in both topologies, as connectedness of an affine 
ind-variety is the same for both topologies). 
In the case of irreducibility, things look different.

If we start with a filtration $X_1 \hookrightarrow X_2 \hookrightarrow \ldots$
of irreducible affine varieties, then one can see that
$\varinjlim X_n$ is an irreducible affine ind-variety in both topologies. 
Likewise one can ask if every irreducible affine ind-variety is obtained from 
a filtration of irreducible affine varieties.
One can see that the latter property is equivalent to the following
condition: the set $\mathscr{K}$ of all irreducible 
components of all $X_n$ is directed under inclusion
for some (and hence every) filtration 
$X_1 \hookrightarrow X_2 \hookrightarrow \ldots \,$.
\name{Shafarevich} claims in \cite{Sh1981On-some-infinite-d} 
that the latter condition is equivalent to the irreducibility of $X$ in the ind-topology.
But \name{Homma} gave in \cite{Ka1996Pro-affine-algebra} a counter-example 
$X$ to this statement. For every filtration 
$X_1 \hookrightarrow X_2 \hookrightarrow \ldots$ of \name{Homma}'s 
counter-example $X$ the number of irreducible components of 
$X_n$ tends to infinity if $n \to \infty$. Here we give 
another counter-example. Namely, we construct an irreducible affine ind-variety 
$X = \varinjlim X_n$ (irreducible with respect to both topologies)
such that $\mathscr{K}$ is not directed, but $X_n$ consist of exactly 
two irreducible components for $n > 1$.

\begin{exa}
	\lab{irred2.exa}
	Let us define $g_n \in k[x_1, \ldots, x_n]$ recursively by
	\[
		g_1 \defneq (x_1 - 1) \, , \qquad 
		g_{n+1} \defneq (x_1 - (n+1)) \cdot g_n - x_{n+1} \, .
	\]
	By construction every $g_n$ is an irreducible polynomial. Let 
	$Y_n \defneq V_{\AA^n}(g_n) \subseteq \AA^n$. It follows that 
	$Y_n \subseteq Y_{n+1}$ for all $n$. Let further 
	$Z_n \defneq V_{\AA^n}(x_2, \ldots, x_n) \subseteq \AA^n$ and 
	$X_n \defneq Y_n \cup Z_n$. It follows that $X_n \subseteq X_{n+1}$
	is a closed subset for all $n$. Let $X \defneq \varinjlim X_n$. 
	We get
	\[
		Y_n \cap Z_n = V_{\AA^n}(g_n, x_2, \ldots, x_n) = 
		V_{\AA^n}( \prod_{i=1}^n (x_1 - i), x_2, \ldots, x_n) = 
		\{ e_1, 2 e_1, \ldots, n e_1 \} \, ,
	\]
	where $e_1 = (1, 0, \ldots, 0) \in \AA^n$. 
	The set $\mathscr{K}$ defined above is not directed and 
	$X_n$ decomposes in two 
	irreducible components for $n > 1$.
	It remains to show that $X$ is irreducible with respect
	to the ind-topology, as in that case $X$ is also irreducible
	in the Zariski topology. As $Y_n$ is irreducible for all 
	$n$, it follows that $Y = \bigcup_n Y_n$ is irreducible. Since
	\[
		Z_m \subseteq \overline{ \bigcup_{n=1}^{\infty} Y_n \cap Z_n } \subseteq 
		\overline{Y} \subseteq X \qquad \textrm{for all $m$} \, ,
	\]
	we have $X = \overline{Y}$, where the closure is taken in the ind-topology. 
	Since $Y$ is irreducible, as a consequence $X$ is also irreducible.
\end{exa}

We conclude this paper with a criterion for the irreducibility of an 
affine ind-variety $X = \varinjlim X_n$ where the number of irreducible
components of $X_n$ is bounded for all $n$. Unfortunately we need for this
criterion also information about the closure of a subset in the ``global'' 
object $X$ and not only about the filtration 
$X_1 \hookrightarrow X_2 \hookrightarrow~\ldots \,$ itself.
The following proposition implies Proposition~\ref{D.prop}.

\begin{prop}
	\lab{irred.crit.prop}
	Let $X = \varinjlim X_n$ be an affine ind-variety such that
	the number of irreducible components of $X_n$ is bounded by $l$ 
	for all $n$. Then $X$ is irreducible in the ind-topology (Zariski topology)
	if and only if for all $n$ there exists an irreducible component
	$F_n$ of $X_n$ such that $F_1 \subseteq F_2 \subseteq \ldots$
	and $\bigcup_n F_n$ is dense in $X$ with respect to the 
	ind-topology (Zariski topology).
\end{prop}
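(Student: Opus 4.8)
The plan is to handle both topologies at once, since the only topology-dependent steps are the two appeals to irreducibility and the passage to closures. For a \emph{chain} $G = (G_n)$, meaning each $G_n$ is an irreducible component of $X_n$ with $G_n \subseteq G_{n+1}$, write $T_G \defneq \bigcup_n G_n$. Each such $T_G$ is $\varinjlim G_n$, a filtration of irreducible varieties, and hence is irreducible in both topologies by the observation recalled at the beginning of this section. The direction $(\Leftarrow)$ then uses nothing about the bound $l$: if some chain $F$ has $T_F$ dense, then $X = \overline{T_F}$ is the closure of an irreducible set, hence irreducible. This argument is valid verbatim in either topology.

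For $(\Rightarrow)$ the heart of the matter is a finiteness statement, and this is the only place the hypothesis is used: \emph{there are at most $l$ distinct sets $T_G$ as $G$ ranges over all chains.} First note that $T_G$ depends only on the tail of $G$, since the $G_n$ are increasing; so one may compare chains indexed from a common level. Given $l+1$ chains $G^1, \dots, G^{l+1}$ with pairwise distinct limits, their components at any fixed level $n$ cannot all be distinct, as $X_n$ has at most $l$ components; so some two agree there. As there are finitely many pairs but infinitely many levels, one fixed pair $(a,b)$ agrees on an infinite set $S$ of levels, whence $T_{G^a} = \bigcup_{n \in S} G^a_n = \bigcup_{n \in S} G^b_n = T_{G^b}$, contradicting distinctness. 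Writing the finitely many limits as $T_{G^{(1)}}, \dots, T_{G^{(s)}}$ with $s \leq l$, I then observe that they cover $X$ as a set: every point lies in some component of some $X_m$, which extends upward to a chain (each component of $X_m$ is irreducible in $X_{m+1}$, hence sits in a component of it), so the point lies in one of the $T_{G^{(i)}}$.

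Now fix either topology and assume $X$ irreducible. Passing to closures in $X = T_{G^{(1)}} \cup \dots \cup T_{G^{(s)}}$ exhibits $X$ as a finite union of irreducible closed subsets $\overline{T_{G^{(i)}}}$, so irreducibility forces $X = \overline{T_{G^{(i_0)}}}$ for some $i_0$; that is, $T_{G^{(i_0)}}$ is dense and its chain is the required one. I expect the finiteness lemma to be the only real obstacle, as everything else is formal; it is precisely there that the bound $l$ is needed, and the pigeonhole over the infinitely many levels is what converts ``at most $l$ per level'' into ``at most $l$ limits''. One remaining point is cosmetic: the dense chain $G^{(i_0)}$ is a priori only defined from some level $M$ onward, so to present it as a chain $F_1 \subseteq F_2 \subseteq \cdots$ of components starting at $n = 1$ I would pass to the equivalent filtration obtained by discarding $X_1, \dots, X_{M-1}$, which preserves both the ind-variety and the bound $l$ (this is exactly the flexibility built into the formulation of Proposition~\ref{C.prop}, where each member of the chain is allowed to be an irreducible component of some $X_m$).
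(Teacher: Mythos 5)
Your proof is correct, but it is not the paper's argument; in fact it is more careful than the paper's argument and repairs a real defect in it. The paper's proof consists of a single labeling claim: one writes $X_n = X_n^1 \cup \ldots \cup X_n^l$ with each $X_n^i$ an irreducible component of $X_n$ and $X_n^i \subseteq X_{n+1}^i$ for all $i$ and $n$ --- that is, $X$ is covered by $l$ chains of components all starting at level $1$ --- and then irreducibility forces $X = \overline{\bigcup_n X_n^{i_0}}$ for some $i_0$. You instead allow chains to start at any level, prove by pigeonhole (some pair of chains must agree on infinitely many levels, and a tail-union is unchanged when computed over a cofinal set of levels) that there are at most $l$ distinct chain-unions $T_G$, observe that these finitely many sets cover $X$, and let irreducibility pick out a dense one. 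The gain is decisive: the paper's labeling claim is not merely unjustified, it is false in general, and the paper's own Example~\ref{irred2.exa} is a counterexample. There $X_1 = Z_1 = \AA^1$ is irreducible, so every chain of components starting at $n = 1$ is forced to be $Z_1 \subseteq Z_2 \subseteq \ldots$ (one checks $Z_n \not\subseteq Y_{n+1}$, since $g_{n+1}$ restricted to the $x_1$-axis is $\prod_{i=1}^{n+1}(x_1 - i) \neq 0$), and its union, the $x_1$-axis, is a proper closed subset of $X$ in both topologies; consequently no labeling of the required kind can ever reach the component $Y_2$, and, more seriously, Proposition~\ref{irred.crit.prop} as literally stated (a chain indexed by all $n \geq 1$ with $F_n$ a component of $X_n$) fails for that irreducible $X$. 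This also means the point you dismiss as cosmetic is actually the heart of the matter: the dense chain may genuinely exist only from some level $M$ onward, so the correct assertion is the one your argument establishes --- the formulation of Proposition~\ref{C.prop}, or equivalently the statement for the equivalent filtration obtained by discarding $X_1, \ldots, X_{M-1}$ --- and that correction is essential, not optional. The individual steps of your argument (reindexing chains to a common starting level; the pigeonhole over the finitely many pairs; the identity $T_{G^a} = \bigcup_{n \in S} G^a_n = \bigcup_{n \in S} G^b_n = T_{G^b}$ for an infinite set $S$ of levels; the covering of $X$ by the $T_{G^{(i)}}$ via upward extension of any component; and the use of irreducibility on $X = \overline{T_{G^{(1)}}} \cup \ldots \cup \overline{T_{G^{(s)}}}$) are all sound, and each is valid verbatim in either topology, as you intended.
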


\begin{proof}
	One can read the proof either with respect to the ind-topology or 
	with respect to the Zariski topology. Let $X = \varinjlim X_n$ be irreducible. 
	For all $n$ let us write $X_n = X^1_n \cup \ldots \cup X^l_n$ 
	where $X^{i}_n$ is an irreducible component of $X_n$ and for all $n$
	we have $X^{i}_n \subseteq X^{i}_{n+1}$ 
	(it can be that $X^i_n = X^j_n$ for $i \neq j$). Thus, one get
	\[
		X = \overline{\bigcup_{n} X^1_n} \cup \ldots \cup 
		\overline{\bigcup_{n} X^l_n} \, .
	\]
	Since $X$ is irreducible the claim follows.
	The converse of the statement is clear.
\end{proof}

\par\bigskip\bigskip
\vskip 0.5cm

\bibliographystyle{amsalpha}

\providecommand{\bysame}{\leavevmode\hbox to3em{\hrulefill}\thinspace}
\providecommand{\MR}{\relax\ifhmode\unskip\space\fi MR }
\providecommand{\MRhref}[2]{%
  \href{http://www.ams.org/mathscinet-getitem?mr=#1}{#2}
}
\providecommand{\href}[2]{#2}

\vskip .5cm

\end{document}